\newtheorem*{rep@theorem}{\rep@title}
\newcommand{\newreptheorem}[2]{%
    \newenvironment{rep#1}[1]{%
        \def\rep@title{#2 \ref{##1}}%
        \begin{rep@theorem}
    }%
    {\end{rep@theorem}}
}
\theoremstyle{plain}
\newtheorem{thm}{Theorem}%[section]
\newtheorem{lemma}{Lemma}[section]
\newtheorem*{cor*}{Corollary}
\newtheorem*{thm*}{Theorem}
\theoremstyle{remark}
\newtheorem*{claim*}{Claim}
\theoremstyle{definition}
\numberwithin{equation}{section}
\def\R{\mathbb{R}}
\def\1{\mathds{1}}
\def\0{\mathbf{0}}
\DeclareMathOperator{\Var}{Var}
\newenvironment{PfofTheorem1}[1]
{\par\vskip2\parsep\noindent{\sc Proof of Theorem\ \ref{Thm1}. }}{{\hfill
$\Box$}
\par\vskip2\parsep}
\newenvironment{PfofTheorem2}[2]
{\par\vskip2\parsep\noindent{\sc Proof of Theorem\ \ref{Thm2}. }}{{\hfill
$\Box$}
\par\vskip2\parsep}
\definecolor{darkerred}{RGB}{192,0,0}
\definecolor{darkerblue}{RGB}{0,0,160}
\definecolor{darkgreen}{RGB}{0,160,0}
\title{A probabilistic analysis of shotgun sequencing for metagenomics}
\author{Marlee Herring}
\address{Marlee Herring\\
Department of Mathematics and Statistics\\
University of North Carolina at Charlotte \\
9201 University City Blvd.\\
Charlotte, NC 28223}
\email{mherri12@uncc.edu}
\begin{document}

\maketitle

\begin{abstract}
    Genome sequencing is the basis for many modern biological and medicinal studies. With recent technological advances, metagenomics has become a problem of interest. This problem entails the analysis and reconstruction of multiple DNA sequences from different sources.
    Shotgun genome sequencing works by breaking up long DNA sequences into shorter segments called reads. Given this collection of reads, one would like to reconstruct the original collection of DNA sequences. %This set of reads is then separated, identified, and reconstructed based on a simple greedy algorithm. \textcolor{purple}{In general, one need not use the simple greedy algorithm, right? I suggest removing the phrase ``based on a simple greedy algorithm" from the previous sentence.}
    %\textcolor{red}{A question that arises through this process is, what is the minimum number and length of reads required to reliably reconstruct the set of genomes? This question allows for the determination of the efficiency of any sequencing algorithm and shows the nature of the relationships between the length of the reads and the length and number of genomes.} \textcolor{blue}{In order to design metagenomics experiments, }
    For experimental design in metagenomics, it is important to understand how the minimal read length necessary for reliable reconstruction depends on the number and characteristics of the genomes involved.
    Utilizing simple probabilistic models for each DNA sequence, we analyze the identifiability of collections of $M$ genomes of length $N$ in an asymptotic regime in which $N$ tends to infinity and $M$ may grow with $N$. Our first main result provides a threshold in terms of $M$ and $N$ so that if the read length exceeds the threshold, then a simple greedy algorithm successfully reconstructs the full collection of genomes with probability tending to one. Our second main result establishes a lower threshold in terms of $M$ and $N$ such that if the read length is shorter than the threshold, then reconstruction of the full collection of genomes is impossible with probability tending to one.
  \end{abstract}
%%%%%%%%%%%%%%%%%%%%%%%%%%%%%%%%%%%%%%%%%%%%%%%
%%%%%%%%%%%%%%%%%%%%%%%%%%%%%%%%%%%%%%%%%%%%%%%

\section{Introduction}

Genome sequencing is the basis for many modern biological and medicinal studies. Recently, the study of metagenomics has risen to the forefront of DNA sequencing technologies. Metagenomics is the study of communities of genomes, meaning that multiple sequences from different species are analyzed at once. For example, metagenomics is often used to determine bacterial populations within the human body, as well as in the soil, and it is important for understanding the overall health of many biological systems.

Shotgun genome sequencing works by breaking up long DNA sequences into shorter reads. This set of reads is then separated, identified, and reconstructed (see Figure \ref{Fig1}).
To accomplish these tasks, there is a simple greedy algorithm that works by choosing a random read and assigning overlap values to each of the other reads. The reads with the greatest overlap values are combined, picking those that are tied at random. This process is repeated until all reads have been combined. This algorithm has been previously analyzed in the context of the reconstruction of a single genome \cite{motahari2013information}, and we note that there are other more sophisticated algorithms used in practice (see \cite{bresler2013optimal} for further discussion of other algorithms). We also note that shotgun reconstruction problems can be generalized to other settings, including graphs \cite{mossel2017shotgun} and groups \cite{raymond2021shotgun}.

\begin{figure}
\includegraphics[width=8cm]{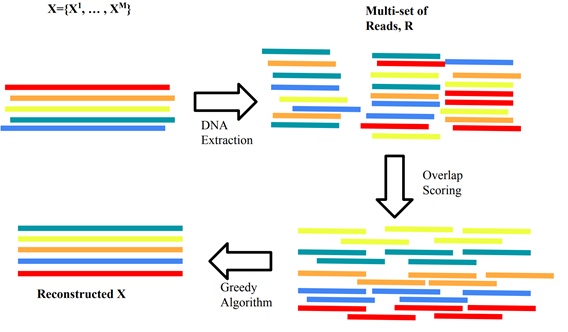} \label{Fig1}
\caption{The process of shotgun sequencing for metagenomics. A collection of DNA sequences (upper left) is sampled, producing a multi-set of reads (upper right). Then the overlap properties of the reads are used to assemble them into longer segments (lower right), eventually resulting an a reconstructed collection of sequences (lower left).}
\end{figure}

%\boldsymbol{Fig. \ 1}-Greedy Algorithm for Metagenomics Sequencing.
%\\

The present paper contains three main contributions.  First, we formulate the shotgun genome sequencing problem within the context of metagenomics. Second, we establish sufficient conditions on the biological parameters under which reconstruction is almost certainly successful (Theorem \ref{Thm1}). Lastly, we establish conditions on the parameters that guarantee that successful reconstruction is almost certainly impossible (Theorem \ref{Thm2}).

The rest of this paper is structured as follows. In Section 2 we define the metagenomics problem in precise mathematical language and present our main results. Section 3 will introduce define and demonstrate the preliminary results needed to proceed with the proof of the main results, which will be provided in sections 4 and 5. Section 6 discusses open questions and possible directions for future work.

\section{Problem formulation and main results}

\subsection{Probabilistic formulation of metagenomics}
The purpose of this section is to introduce the metagenomics problem and our probabilistic model. In this context, we would like to do a blind reconstruction of a set of reads from multiple genomes and find the parameters that make reliable reconstruction possible.

Let $\mathcal{A}$ be the finite alphabet of all possible nucleotides (often abbreviated by $A$, $C$, $G$, and $T$). 
We assume that there are $M$ different DNA sequences in the sample, denoted by $X = (X^1,\dots , X^M)$. Furthermore, we assume that each genome $X^m$ is a sequence of $N$ symbols from $\mathcal{A}$, denoted by $X^m=x^m_1\dots x^m_N \in \mathcal{A}^N$. Next we suppose that the shotgun sequencing technology provides us access to the multi-set $\mathcal{R} = \mathcal{R}(X)$ of all possible reads of length $L$ from $X$: %denoted:
\begin{equation}
    \mathcal{R}(X) = \bigl\{x^m_n...x^m_{n+L-1} \in \mathcal{A}^{L} \mid 1 \leq m \leq \ M,\  1 \leq n \leq N-L+1 \bigr\}.
\end{equation}
The goal of shotgun metagenomics sequencing is to carry out a blind reconstruction of $X$ given access to the multi-set of reads. Successful reconstruction is only possible if $\mathcal{R}$ is identifiable, meaning that there is only one collection of genomes (up to a permutation of the labels) that gives rise to the given read-set $\mathcal{R}$. It is important to note that it is impossible to determine the precise order of genomes within the sample, and therefore it is more accurate to say that we are seeking to reconstruct the multi-set of genomes $\{ X^1,\dots , X^M \}$ from the sample. In order to state this idea precisely, let $S_M$ be the group of permutations of the indices $\{1,\dots,M\}$, let $\sigma \in S_M$, and let $\sigma(X) = (X^{\sigma(1)},\dots,X^{\sigma(M)})$. Then the equivalence class of $X$ is denoted by $[X] = \{ \sigma(X) : \sigma \in S_M\}$. Finally, we say that $X$ is \textit{identifiable} if $\mathcal{R}(X) = \mathcal{R}(X')$ implies that $X' \in [X]$.

The main question of this work is as follows: if a collection of genomes is drawn at random, then what is the probability that it will be identifiable? 
When approaching this problem, we suppose that the sample of genomes $X$ is drawn randomly from some probability distribution. We then analyze the probability of reconstructing $X$ from its multi-set of reads, $\mathcal{R}(X)$, as the genome length $N$ tends to infinity. To construct our probabilistic model, we consider a collection $P = (p^1,\dots,p^M)$ of $M$ probability distributions on the finite set $\mathcal{A}$ of possible base pairs. We assume that each genome $X^m$ is composed of $N$ symbols drawn in an IID fashion with distribution $p^m$, and we suppose that all the sequences in $X$ are generated independently of each other. We'll refer to the tuple $(M,N,L,P)$ as specifying a \textit{metagenomics problem}. % (where $P = (p^1,\dots,p^M)$). 
In this context, we let $I$ be the event that $X$ is identifiable. %This means that given some $\mathcal{R}$, by employing our greedy algorithm one will produce a single set of genomes, $X$.

Within this work, we denote sequences of metagenomics problems as follows. For every $n \in N$, we assume that we take reads of length $L_n$ from $M_n$ genomes of length $N_n$ that are constructed according to the probability distributions in $P_n= (p^1_n,\dots,p^{M_n}_n)$. We consider the probability of reliable reconstruction in terms of the asymptotic behavior of the above parameters. We let $I_n$ denote the identifiability event for the problem $(M_n, N_n, L_n, P_n)$.

We recognize that the model given here can be generalized and made more realistic in a variety of ways (see Section \ref{sect6}), but we believe it represents a necessary first step in the mathematical analysis of metagenomics problems. 
\subsection{Identifiability result}

In this section, we give a sufficient condition for reliable reconstruction of $X$ with high probability. Our sufficient condition states that the read length must be long enough relative to the number of genomes, the length of the genomes, and the probability distributions involved. Before we state the result, we require the definition of the second order R\'{e}nyi entropy: for a probability distribution $p$ on $\mathcal{A}$, we have
    \begin{equation}
        H_2(p)=-\mathrm{log} \sum_{a \in \mathcal{A}} (p(a))^2.
    \end{equation}
We may now state our first main result.
\begin{thm} \label{Thm1} %[Theorem 1.1]
Suppose $\{(M_n, N_n, L_n, P_n)\}_{n=1}^{\infty}$ is a sequence of metagenomics problems such that
\begin{enumerate}
    \item $\lim_{n \to \infty}(N_n) = \infty$,
    \item there exists $H_* > 0$ such that for all $n$ and all $ p_n^m \in P_n$ we have $H_2(p_n^m) \geq H_*$, and
    \item there exists $\epsilon > 0$ such that for all large $n$, we have 
    \begin{equation*}
        L_n  \geq  \frac{2(1+\epsilon)}{H_*}\mathrm{log}(M_n N_n).
    \end{equation*}
\end{enumerate}
Then 
    \begin{equation*}
        \lim_{n\to\infty} \mathbb{P}(I_n) = 1.
    \end{equation*}
\end{thm}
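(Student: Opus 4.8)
The plan is to isolate a clean combinatorial event that forces identifiability and then show that it holds with probability tending to one. Fix the $n$-th problem and, to lighten notation, write $p^m$ for $p^m_n$, write $x^m_i$ for the symbols of the $m$-th genome, and write $R^m_i = x^m_i\cdots x^m_{i+L-1}$ for its reads. Call two distinct locations $(m,i)\neq(m',j)$ an \emph{$(L-1)$-collision} if the length-$(L-1)$ windows agree, $x^m_i\cdots x^m_{i+L-2} = x^{m'}_j\cdots x^{m'}_{j+L-2}$, and let $E_n$ be the event that $X$ has no $(L-1)$-collision whatsoever. I would first show $E_n\subseteq I_n$. On $E_n$ all reads are distinct, and if we draw a directed edge from $r$ to $r'$ whenever the length-$(L-1)$ suffix of $r$ equals the length-$(L-1)$ prefix of $r'$, then the suffix of $R^m_i$ is a window at location $(m,i+1)$, so the only read that can carry it as a prefix is the true successor $R^m_{i+1}$ (any other match would be a collision). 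Hence every read has in- and out-degree at most one, the edges are exactly the true-successor edges, and the reads decompose into precisely $M_n$ vertex-disjoint directed paths, each spelling out one genome. Thus $\mathcal{R}(X)$ determines $[X]$, so $X$ is identifiable; the same observation shows the greedy algorithm succeeds, since on $E_n$ the only overlaps of length $L-1$ are the true ones. It therefore suffices to prove $\mathbb{P}(E_n^c)\to 0$.

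For the second step I would bound $\mathbb{P}(E_n^c)$ by a union bound over the at most $\binom{M_n N_n}{2}$ unordered pairs of window locations, splitting according to whether the two windows are probabilistically independent. Two windows are independent when they lie in different genomes, or in the same genome at positions differing by at least $L-1$; in either case the probability that they agree in all $L-1$ coordinates is at most $e^{-(L-1)H_*}$. In the same-genome case this is immediate from $\sum_a (p^m(a))^2 = e^{-H_2(p^m)}\le e^{-H_*}$, and in the cross-genome case it follows from Cauchy--Schwarz, $\sum_a p^m(a)p^{m'}(a)\le e^{-(H_2(p^m)+H_2(p^{m'}))/2}\le e^{-H_*}$. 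Summing over these pairs gives a contribution of order $(M_nN_n)^2\, e^{-(L-1)H_*}$, and substituting the hypothesis $L_n\ge \frac{2(1+\epsilon)}{H_*}\log(M_nN_n)$ makes this $O\bigl((M_nN_n)^{-2\epsilon}\bigr)\to 0$, since $M_nN_n\ge N_n\to\infty$.

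The delicate contribution, and the step I expect to be the main obstacle, comes from the overlapping pairs: two windows in the same genome whose starts differ by some $d$ with $1\le d\le L-2$, where independence fails. Here agreement is equivalent to the length-$(d+L-1)$ block the windows span being $d$-periodic. Grouping those coordinates into the $d$ residue classes modulo $d$, the periodicity probability factorizes across classes; because $L-1>d$ every class contains at least two coordinates, so using $\sum_a p(a)^c\le (\sum_a p(a)^2)^{c/2}$ for $c\ge 2$ together with $\sum_r c_r = d+L-1$ yields the bound $\prod_r \sum_a (p^m(a))^{c_r}\le \bigl(\sum_a (p^m(a))^2\bigr)^{(d+L-1)/2}\le e^{-H_*(d+L-1)/2}$. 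Summing over the at most $M_nN_n$ choices of first window and over $d\ge 1$ leaves a convergent geometric series in $d$, giving an overlapping contribution of order $M_nN_n\, e^{-H_*(L-1)/2}$, which under the same hypothesis is $O\bigl((M_nN_n)^{-\epsilon}\bigr)\to 0$. Combining the two estimates gives $\mathbb{P}(E_n^c)\to 0$ and hence $\mathbb{P}(I_n)\to 1$. The only point requiring genuine care is this correlation structure of the overlapping windows; everything else is a routine union bound, and it is reassuring that both the independent and the overlapping estimates become binding at exactly the same leading threshold $L\sim \frac{2}{H_*}\log(M_nN_n)$, which is precisely what produces the constant $2/H_*$ in the hypothesis.
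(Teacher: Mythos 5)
Your proposal is correct and follows essentially the same route as the paper: reduce identifiability to the absence of $(L-1)$-repeats, split the repeats into non-overlapping pairs (handled by independence plus Cauchy--Schwarz, giving $e^{-(L-1)H_*}$ per pair) and overlapping same-genome pairs (handled by the $e^{-(L-1)H_*/2}$ bound), and finish with a union bound against the threshold $L\geq \frac{2(1+\epsilon)}{H_*}\log(M_nN_n)$. The only difference is that you supply self-contained proofs of the two ingredients the paper cites --- the combinatorial ``no repeats implies identifiable'' lemma and the periodicity estimate for overlapping windows --- and both of your arguments are sound.
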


Roughly speaking, we interpret Theorem \ref{Thm1} to mean that by taking reads of length greater than $\frac{2}{H_*}\mathrm{log}(M_n N_n)$, the probability of identifiability tends to one as $N_n$ tends to infinity. When $M_n=1$ for all $n$, we recover the result of \cite{motahari2013information} up to a factor of $2$. Furthermore, our proof shows that, under the hypotheses of the theorem, a simple greedy algorithm will reliably reconstruct $X$ given $\mathcal{R}(X)$, with probability tending to one.

Observe that by rearranging the threshold for reconstruction to solve for $M$, we see that the number of sequences in the sample should satisfy 
\begin{equation}
    M \leq \frac{1}{N}e^{LH_*/2}.
\end{equation}
Interpreting Theorem \ref{Thm1} from this point of view, we see that the number of sequences in the sample should not be too large relative to the length of the genomes, reads, and probability distributions.

\subsection{Non-identifiability result}

In this section, we present our main result on non-identifiability. Indeed, we give a threshold in terms of $M_n$, $N_n$, and $P_n$ such that if $L_n$ is less than the threshold, then the probability of identifiability tends to zero with probability one.
Before stating this result, we define a type of cross-entropy between distributions: for two probability distributions $p$ and $p'$ on $\mathcal{A}$, let
\begin{equation*}
    F(p,p')= -\mathrm{log}\left(\sum_{a \in \mathcal{A}} p(a) \cdot  p'(a) \right).
\end{equation*} 
Now we are ready to state our second main result.

\begin{thm} \label{Thm2} %[Theorem 1.2]
Suppose that $\{(M_n,N_n,L_n,P_n)\}_{n=1}^{\infty}$ is a sequence of metagenomics problems such that
\begin{enumerate}
    \item for all $n$, we have $M_n \geq 2$,
    \item $\lim_{n \to \infty}(N_n) = \infty$, 
    \item $\lim_{n \to \infty} \log(M_n)/N_n = 0$.
    \item there exists $F_*, F^* \in (0,\infty)$ such that for all $n$ and all $1 \leq m, m' \leq M_n$, we have $F_* \leq F(p_n^m, p_n^{m'}) \leq F^*$, 
    \item there exists $\delta \in (0,1)$ such that $L_n \leq \delta N_n$ for all large enough $n$, and
    \item letting $C = \min(1/F^*,1/(2F^*-F_*))$, there exists $\epsilon >0$ such that for all large enough $n$, we have
    \begin{equation*}
     L_n \leq C(1-\epsilon) \log(M_n N_n).
     \end{equation*}
\end{enumerate}
%we assume that $M \geq 2$ and that $MN \to \infty$. It is important to note that the case in which $M=1$ is no longer a metagenomics problem and has already been explored in \cite{motahari2013information}.
%There $\exists \ \delta \ \in \ (0,1)$ such that for all large $n$, $L \leq \delta N$.
%Assume $\exists \ F_*,F^* \ \in \ [0,1]$ such that for all large $n$ and $1 \leq m,m' \leq M_n$, we have $F_*\leq F(P_n^m,P_n^{m'})\leq F^*$. Let $C = \min(1,1/(2F^*-F_*))$, then there exists some $\epsilon >0$ such that for all large enough $n$:
%    \begin{equation*}
%        L \leq C(1-\epsilon) \log(MN)
%    \end{equation*}
Then
\begin{equation*} \lim_{n\to\infty} P(I_n)=0. 
\end{equation*}
\end{thm}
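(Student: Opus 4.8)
The plan is to prove non-identifiability by exhibiting, with probability tending to one, a \emph{read-preserving swap}: a second collection $Y$ of genomes with $Y \notin [X]$ but $\mathcal{R}(Y) = \mathcal{R}(X)$. The mechanism exploits two genomes that agree on a window of length $L_n-1$ at a \emph{common} position. Concretely, suppose that for some pair $m \neq m'$ and some admissible position $i$ we have
\[
x^m_i \cdots x^m_{i+L_n-2} = x^{m'}_i \cdots x^{m'}_{i+L_n-2}.
\]
Form $Y^m$ by concatenating the prefix $x^m_1\cdots x^m_{i+L_n-2}$ with the suffix $x^{m'}_{i+L_n-1}\cdots x^{m'}_{N_n}$, form $Y^{m'}$ symmetrically, and leave the other genomes unchanged; each resulting genome again has length $N_n$. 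I would first check that every length-$L_n$ read crossing the new junction already occurs in $X$: such a read of $Y^m$, starting at some $k$ with $i\le k\le i+L_n-2$, reads symbols of $X^m$ on the shared window followed by symbols of $X^{m'}$, and since $x^m_k\cdots x^m_{i+L_n-2}=x^{m'}_k\cdots x^{m'}_{i+L_n-2}$ it equals the read of $X^{m'}$ at position $k$. Tracking all positions shows $\mathcal{R}(Y)=\mathcal{R}(X)$, so that, provided the swap is non-trivial, $X$ is not identifiable.

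With this reduction in hand, the task becomes a first- and second-moment analysis of the number of such agreements. For each unordered pair $\{m,m'\}$ and each admissible position $i$, let $Z_{m,m',i}$ be the indicator that $X^m$ and $X^{m'}$ agree on the length-$(L_n-1)$ window starting at $i$, and set $W=\sum Z_{m,m',i}$. By independence of the symbols,
\[
\mathbb{E}[Z_{m,m',i}] = \Bigl(\sum_{a}p^m_n(a)p^{m'}_n(a)\Bigr)^{L_n-1} = e^{-(L_n-1)F(p^m_n,p^{m'}_n)} \geq e^{-(L_n-1)F^*},
\]
so summing over the $\binom{M_n}{2}$ pairs and the $\Theta(N_n)$ positions gives $\mathbb{E}[W] \gtrsim M_n^2 N_n\, e^{-(L_n-1)F^*}$. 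Using Condition (6) with $C \le 1/F^*$ forces $e^{-L_nF^*} \geq (M_nN_n)^{-(1-\epsilon)}$, whence $\mathbb{E}[W]\gtrsim M_n(M_nN_n)^{\epsilon}\to\infty$.

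To upgrade $\mathbb{E}[W]\to\infty$ into $\mathbb{P}(W>0)\to1$ I would apply the second moment method, $\mathbb{P}(W=0)\le \Var(W)/\mathbb{E}[W]^2$. Indicators indexed by disjoint genome sets or by disjoint windows are independent and contribute at most $\mathbb{E}[W]^2$; the dominant correlated contribution comes from two agreements sharing a single genome at a common position. Choosing the shared genome ($M_n$ ways), two partners ($\Theta(M_n^2)$ ways) and the position ($\Theta(N_n)$ ways), the joint probability is $\bigl(\sum_a p^m_n(a)p^{m'}_n(a)p^{m''}_n(a)\bigr)^{L_n-1}\le e^{-(L_n-1)F_*}$, so this term is $\lesssim M_n^3 N_n\, e^{-(L_n-1)F_*}$. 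Dividing by $\mathbb{E}[W]^2 \gtrsim M_n^4 N_n^2 e^{-2(L_n-1)F^*}$ yields a ratio of order
\[
\frac{e^{(L_n-1)(2F^*-F_*)}}{M_nN_n}.
\]
This is precisely where the second term of $C=\min(1/F^*,1/(2F^*-F_*))$ enters: Condition (6) gives $L_n(2F^*-F_*)\le(1-\epsilon)\log(M_nN_n)$, making the ratio vanish. The remaining correlated terms (the same pair at overlapping positions, and the diagonal) are lower order in $M_n$ and handled the same way, so $\mathbb{P}(W>0)\to1$.

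Finally I would confirm that a witnessed agreement produces a genuinely non-trivial swap with high probability, so that $Y\notin[X]$. The swap fails to change the multiset only if the two swapped suffixes (or prefixes) coincide; two independent suffixes of length $\Theta(N_n)$ coincide with probability at most $e^{-cN_n}$ for some $c>0$ depending on $F^*$, and a union bound over the $O(M_n^2)$ pairs is negligible exactly because $\log M_n/N_n\to0$ (Condition (3)), while $L_n\le\delta N_n$ (Condition (5)) guarantees the suffixes have length at least $(1-\delta)N_n>0$ so the construction is meaningful. Combining the steps gives $\mathbb{P}(I_n)\to0$. I expect the main obstacle to be the second-moment computation: isolating the shared-genome term, bounding its joint probability by $e^{-(L_n-1)F_*}$, and matching the resulting exponent $2F^*-F_*$ to the definition of $C$ is the delicate heart of the argument.
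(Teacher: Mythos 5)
Your overall strategy is the same as the paper's: count common-position window agreements between pairs of genomes, show by the first and second moment method (using the two branches of $C$ exactly as you do) that at least one such agreement occurs with probability tending to one, and then convert an agreement into a read-preserving swap that exits the equivalence class $[X]$. Your moment computations match the paper's; the paper's variance bound also picks up a factor of $L_n$ from pairs of agreements that share a genome at \emph{overlapping but distinct} positions, which you fold into ``lower order'' terms --- strictly they are larger than your leading term by that factor of $L_n$, but since $L_n = O(\log(M_nN_n))$ the ratio still vanishes. One genuine simplification on your side: your criterion for the swap to be non-trivial (the swapped prefixes differ and the swapped suffixes differ) is in fact sufficient for $Y \notin [X]$, because $Y$ and $X$ agree on all coordinates other than $m,m'$ and multiset sum is cancellative, so $[Y]=[X]$ forces $\{Y^m,Y^{m'}\}=\{X^m,X^{m'}\}$; the paper additionally excludes a four-genome coincidence event ($T_j$ in its Lemma 3.4), which your argument shows is not actually needed.

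The one step that fails as written is the final union bound. You assert that condition (5) guarantees the swapped suffixes have length at least $(1-\delta)N_n$, but the suffix beginning at $i+L_n-1$ has length $N_n-i-L_n+2$, which is $o(N_n)$ --- and can even be $0$ or $1$ --- when $i$ is near $N_n-L_n$; symmetrically, the prefix $x^m_1\cdots x^m_{i-1}$ is short when $i$ is near $1$. For such positions the probability that the swapped pieces coincide is not $e^{-cN_n}$ but can be of constant order (for a length-one prefix it is $\sum_a p^m(a)p^{m'}(a)\geq e^{-F^*}$), so the union bound over all pairs and all positions does not tend to zero. The fix is the one the paper uses: define $W$ by summing only over positions $i\in[\eta N_n,(1-\eta)N_n-L_n]$ for a small $\eta>0$ chosen relative to $\delta$. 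This costs only a constant factor in $\mathbb{E}[W]$, leaves the second-moment ratio unchanged, and guarantees that every counted agreement has prefix and suffix of length $\Theta(N_n)$, so the coincidence probability is $e^{-\Omega(N_n)}$ and the union bound over the $O(M_n^2N_n)$ indices vanishes by condition (3).
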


Let us discuss each of the hypotheses in this result. In (1) we simply assume that we are truly in the metagenomics scenario: indeed, the case $M_n=1$ is the standard shotgun sequence problem with a single genome, for which bounds of this type already exist \cite{motahari2013information}.
With hypothesis (2) we assume that we are in the asymptotic regime in which the length of the genomes tends to infinity, and in hypothesis (3) we assume that the number of genomes is subexponential in $N_n$. % $N_n$ tends to infinity and $log(M_n)/N_n$ tends to zero.
In (4) we assume some uniform bounds on the cross-entropy of the generating distributions, and in (5) we assume that the ratio of the read length to the genome length is bounded away from one. %Note that for any distributions $p$ and $p'$, we have $F(p,p') \in [0,1]$. Also, there exists some $F_*, F^*$ such that for any combination of genomes, $1 \leq m, m' \leq M$, we have that $F_* \leq F(p_n^m, p_n^{m'}) \leq F^*$. Hypothesis (5) states that for all large $n$ there exists some $\delta \in \ (0,1)$ such that $L \leq \delta N$. 
We view (6) as the key hypothesis of the theorem, as it gives a bound on the read length in terms of the number and length of the genomes involved. %Then, by letting $C= \min(1/F^*,1/(2F^*-F_*))$, there exists some $\epsilon > 0$ such that we get $L_n \leq C(1-\epsilon) \log(M_n N_n)$.
In short, we interpret Theorem \ref{Thm2} to mean that by taking reads of length $L_n \leq C(1-\epsilon) \log(M_n N_n)$, the probability of identifiability tends to zero. In algorithmic terms, under the hypotheses of the theorem, for any fixed algorithm, the probability of successful reconstruction by that algorithm will tend to zero. %the reads are of length $L \leq C(1-\epsilon) \log(MN)$, then even by employing a simple greedy algorithm reliable reconstruction will be impossible.

By rearranging the threshold for reconstruction to solve for $M_n$, we observe that the number of sequences allowed in $X$ is related to the length of the sequences as follows:
\begin{equation*}
M \geq \frac{e^{L/C}}{N}.
\end{equation*}
This inequality suggests that the if there are too many sequences in the sample relative to the length of the genomes and the read length, then the probability of successful reconstruction is small.

\section{Background, notation, and preliminary results} \label{sect3}

The identifiability of a sample is highly dependent on the presence of repeated segments of length approximately $L-1$ where $L$ is the length of the reads. 
For this reason, we begin our analysis by considering the event that certain strings are repeated within the sample.
%we present several lemmas concerning repeated reads. 
To begin, for a genome $X^m$ and two indices $1 \leq i \leq j \leq N$, let $X^m[i,j] = x^m_i \dots x^m_j$. Next, let us say that the sample $X$ has a repeated pattern of length $\ell$ if there exists $(m,i) \neq (m',i')$, with $1 \leq m, m' \leq M$ and $1 \leq i, i' \leq N-\ell+1$, such that $X^m[i,i+\ell-1] = X^{m'}[i',i'+\ell-1]$. In this case, we refer to the pair $(m,i)$, $(m',i')$ as a \textit{repeat}.  Additionally, we let $S^{m,m'}_{\ell}(i,i')$ denote the event that $(m,i)$, $(m',i')$ is a repeat of length $\ell$ for $X$. If $X$ contains no repeated pattern of length $\ell$, then we say that  all $\ell$-segments are distinct. % no two $L-1$ segments taken from any positions on any genomes in $X$ are the same, i.e., if $1 \leq m \leq  m' \leq M$ and $1 \leq i , j \leq N-L$, the $L-1$ sequences at position $i$ on $X^m$ and position $j$ on $X^{m'}$ must not be equal, i.e., $x^m_i \dots x^m_{i+L-2} \neq x^{m'}_j \dots x^{m'}_{j+L-2}$.
We note that if $m \neq m'$ or $|i-i'| >\ell$, then the strings $X^m[i,i+\ell-1]$ and $X^{m'}[i',i'+\ell-1]$ are independent, and therefore
\begin{equation} \label{Eqn:BasicProb}
    \mathbb{P}\bigl( S^{m,m'}_{\ell}(i,i')\bigr) = \left( \sum_{a \in \mathcal{A}} p^m(a) \, p^{m'}(a) \right)^{\ell} = e^{-\ell F(p^m, \, p^{m'})}.
\end{equation}

Our identifiability result relies on the following combinatorial lemma. For a proof of this result in the language of graphs, see \cite[Lemma 2.4]{mossel2017shotgun}. We note that the proof extends immediately to the metagenomics situation.
\begin{lemma} \label{lem3.1}
Let $(M,N,L,P)$ be a metagenomics tuple. If the $(L-1)$-segments of the sample $X$ are all distinct, then $X$ is identifiable. 
\end{lemma}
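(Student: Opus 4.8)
The plan is to show that if all $(L-1)$-segments of $X$ are distinct, then the multi-set of reads $\mathcal{R}(X)$ determines $X$ up to relabeling. The natural framework is to encode the read-set in a de Bruijn-type graph: form a directed multigraph $G$ whose vertices are the $(L-1)$-segments appearing in the reads, and draw an edge from $u$ to $v$ for each read whose length-$(L-1)$ prefix is $u$ and length-$(L-1)$ suffix is $v$. Each read of length $L$ contributes exactly one such edge, so $G$ is constructible directly from $\mathcal{R}(X)$ without knowing $X$.

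Next I would observe that each genome $X^m$ traces out a directed walk in $G$: the reads coming from $X^m$ are precisely $X^m[n, n+L-1]$ for $1 \le n \le N-L+1$, and consecutive reads overlap in an $(L-1)$-segment, so they form a walk visiting the vertices $X^m[1,L-1], X^m[2,L], \dots, X^m[N-L+1, N-1]$ in order. The collection of these $M$ walks uses every edge of $G$ exactly once, so together they give an edge-partition of $G$ into walks. The key point is that the distinctness hypothesis forces this decomposition to be essentially unique. Since all $(L-1)$-segments across the entire sample are distinct, every vertex of $G$ is visited at most once across all the walks combined; hence each vertex has in-degree at most one and out-degree at most one. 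A directed multigraph in which every vertex has in- and out-degree at most one is a disjoint union of simple directed paths and cycles, and its partition into maximal walks (equivalently, into connected components that are paths) is forced.

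From here I would argue reconstruction. Because the genomes are generated from IID symbols, with probability one no genome is periodic in a way that closes a walk into a cycle, and in fact the distinctness assumption already rules out repeated vertices, so each component is a simple directed path rather than a cycle; reading the labels along each maximal path recovers a genome string of length $N$. Any collection $X'$ with $\mathcal{R}(X') = \mathcal{R}(X)$ produces the same graph $G$ and hence the same set of maximal paths, so the multi-set of reconstructed strings is identical; this means $X' \in [X]$, which is exactly identifiability. The main obstacle — and the reason the combinatorial lemma is stated as a black box citing \cite[Lemma 2.4]{mossel2017shotgun} — is verifying that the degree-at-most-one condition genuinely forces a \emph{unique} walk decomposition and that no ambiguity arises at junctions where several genomes might share structure; once distinctness guarantees every $(L-1)$-segment is used as a vertex exactly once, this bookkeeping becomes routine, but it is the step requiring the most care, since the whole argument hinges on translating "no repeated $(L-1)$-pattern" into "the Eulerian-type path decomposition of $G$ is unique."
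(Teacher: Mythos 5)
Your proof is correct and follows the standard de Bruijn-graph argument: distinctness of the $(L-1)$-segments forces every vertex to have in- and out-degree at most one, so the graph decomposes uniquely into $M$ simple directed paths, each of which must be traced by exactly one genome of any $X'$ with the same read multi-set. The paper does not write out a proof at all---it cites \cite[Lemma 2.4]{mossel2017shotgun} and notes the graph-theoretic argument extends to the metagenomic setting---and your argument is a correct self-contained version of exactly that approach (the brief appeal to ``with probability one'' is unnecessary, as you yourself note, since distinctness already rules out cycles deterministically).
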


We now present several lemmas that are used in our proof of the non-identifiability result. First, we require a factor relating the cross-entropy to the entropy.
%Finally, we require one additional lemma that relates $F(p,p')$ to $H_2(p)$ and $H_2(p')$.
\begin{lemma} \label{Lemma:FtoH}
Let $p$ and $p'$ be two distributions on $\mathcal{A}$. Then
\begin{equation*}
F(p,p') \geq \frac{1}{2}H_2(p) + \frac{1}{2} H_2(p').
\end{equation*}
Consequently, we also have $F(p,p') \geq \min(H_2(p), H_2(p'))$.
\end{lemma}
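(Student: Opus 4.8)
The plan is to recognize that the claimed inequality is, after unwinding the definitions of $F$ and $H_2$ and exponentiating, exactly the Cauchy--Schwarz inequality applied to the vectors $(p(a))_{a \in \mathcal{A}}$ and $(p'(a))_{a \in \mathcal{A}}$. Concretely, the asserted bound $F(p,p') \geq \tfrac{1}{2}H_2(p) + \tfrac{1}{2}H_2(p')$ reads
\begin{equation*}
-\log\!\left(\sum_{a \in \mathcal{A}} p(a)\,p'(a)\right) \;\geq\; -\frac{1}{2}\log\!\left(\sum_{a \in \mathcal{A}} p(a)^2\right) -\frac{1}{2}\log\!\left(\sum_{a \in \mathcal{A}} p'(a)^2\right),
\end{equation*}
and since $-\log$ is strictly decreasing, this is equivalent to the reverse inequality after exponentiating, namely
\begin{equation*}
\sum_{a \in \mathcal{A}} p(a)\,p'(a) \;\leq\; \left(\sum_{a \in \mathcal{A}} p(a)^2\right)^{1/2}\left(\sum_{a \in \mathcal{A}} p'(a)^2\right)^{1/2}.
\end{equation*}

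So the first step is to apply Cauchy--Schwarz directly to obtain the displayed product bound on $\sum_a p(a)p'(a)$. The second step is to take $-\log$ of both sides, carefully noting that the monotonicity of $-\log$ flips the direction of the inequality back to the one we want, and to use $\log(xy) = \log x + \log y$ together with $\log(x^{1/2}) = \tfrac12 \log x$ to distribute the logarithm and recover precisely $\tfrac{1}{2}H_2(p) + \tfrac{1}{2}H_2(p')$. (One should also observe that both inner sums are strictly positive, so all the logarithms are well defined.)

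For the stated consequence, I would simply note that the arithmetic mean of two real numbers is at least their minimum, so $\tfrac{1}{2}H_2(p) + \tfrac{1}{2}H_2(p') \geq \min\bigl(H_2(p), H_2(p')\bigr)$, and chain this with the main inequality.

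I do not anticipate any genuine obstacle here: the only point requiring a moment of care is tracking the sign flip when passing through the decreasing function $-\log$, which is where a direction error would most easily slip in. Everything else is bookkeeping with logarithm identities and the elementary fact about minima.
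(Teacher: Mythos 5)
Your proposal is correct and follows exactly the same route as the paper: both apply the Cauchy--Schwarz inequality to the inner product $\sum_{a} p(a)p'(a)$ and then take $-\log$, using the monotonicity of the logarithm and the identity $\log(xy) = \log x + \log y$; your version merely spells out the sign-flip and the derivation of the ``consequently'' clause more explicitly than the paper does. No gaps.
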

\begin{proof}
Recall that 
\begin{equation*}
    F(p,p')= -\mathrm{log}\left(\sum_{i \in \mathcal{A}} p(i) \cdot  p'(i) \right).
\end{equation*} 
Viewing the distributions $p$ and $p'$ as vectors in $\R^{|\mathcal{A}|}$, we see that $F(p,p') = - \log (\langle p, p' \rangle)$, where $\langle \cdot, \cdot \rangle$ denotes the inner product. By applying the Cauchy-Schwartz inequality and properties of the logarithm, we get
\begin{equation*}
    F(p,p') \geq  \frac{1}{2}H_2(p) + \frac{1}{2} H_2(p'),
\end{equation*} 
as desired.
%Thus,
%\begin{equation*}
%F(p,p') \geq \frac{1}{2}H_2(p) + \frac{1}{2} H_2(p').
%\end{equation*}
\end{proof}

Now let $2 \leq j \leq N-L$, and suppose that $m_1,\dots,m_4$ are mutually distinct integers in $[1,M]$. Let $T_j(m_1,m_2,m_3,m_4)$ denote the event that there exists a word $w$ of length $L$ such that
\begin{align*}
    X^{m_1} & = awb \\
    X^{m_2} & = cwd \\
    X^{m_3} & = awd \\
    X^{m_4} & = cwb,
\end{align*}
where $a$ and $c$ have length $j-1$. 
\begin{lemma} \label{lem3.3}
Suppose that for all $1 \leq m \leq M$, we have $H_2(p^m) \geq H_*$. Then
\begin{equation*}
    \mathbb{P}(T_j(m_1,m_2,m_3,m_4)) \leq e^{-2N H_*}.
\end{equation*}
\end{lemma}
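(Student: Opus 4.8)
The plan is to use the fact that the event $T_j(m_1,m_2,m_3,m_4)$ splits into constraints on three consecutive, and hence independent, blocks of coordinates. Partition the coordinates $\{1,\dots,N\}$ into a \emph{prefix} $\{1,\dots,j-1\}$ (length $j-1$), a \emph{middle} block $\{j,\dots,j+L-1\}$ (length $L$, where the common word $w$ sits), and a \emph{suffix} $\{j+L,\dots,N\}$ (length $N-j-L+1$). The prefix constraint ($X^{m_1},X^{m_3}$ share $a$ and $X^{m_2},X^{m_4}$ share $c$), the middle constraint (all four genomes carry $w$), and the suffix constraint ($X^{m_1},X^{m_4}$ share $b$ and $X^{m_2},X^{m_3}$ share $d$) depend on disjoint sets of coordinates; since all symbols are drawn independently, these three events are independent and
\[
\mathbb{P}\bigl(T_j(m_1,m_2,m_3,m_4)\bigr) = P_{\mathrm{pre}}\cdot P_{\mathrm{mid}}\cdot P_{\mathrm{suf}}.
\]

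I would first dispatch the prefix and suffix, which only involve pairwise agreements. On the prefix, the requirements $X^{m_1}=X^{m_3}$ and $X^{m_2}=X^{m_4}$ (restricted to the prefix) involve disjoint genomes and are thus independent, so applying the per-coordinate identity behind \eqref{Eqn:BasicProb} over all $j-1$ prefix coordinates gives
\[
P_{\mathrm{pre}} = e^{-(j-1)\left(F(p^{m_1},p^{m_3}) + F(p^{m_2},p^{m_4})\right)},
\]
and symmetrically
\[
P_{\mathrm{suf}} = e^{-(N-j-L+1)\left(F(p^{m_1},p^{m_4}) + F(p^{m_2},p^{m_3})\right)}.
\]
By the consequence of Lemma \ref{Lemma:FtoH}, each cross-entropy here is at least $\min_m H_2(p^m)\ge H_*$, so both bracketed sums are at least $2H_*$; hence $P_{\mathrm{pre}}\le e^{-2(j-1)H_*}$ and $P_{\mathrm{suf}}\le e^{-2(N-j-L+1)H_*}$.

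The middle block is where I expect the real work to lie, since it is the only point at which a four-fold (rather than pairwise) coincidence must be controlled, and it is from here that the extra factor of $2$ is extracted. Coordinate by coordinate, the probability that all four genomes agree is
\[
q := \sum_{a\in\mathcal{A}} p^{m_1}(a)\,p^{m_2}(a)\,p^{m_3}(a)\,p^{m_4}(a),
\]
so $P_{\mathrm{mid}} = q^{L}$. The crucial estimate is $q\le e^{-2H_*}$, which I would prove using the generalized H\"older inequality with four functions, each taken in $\ell^4$:
\[
q \le \prod_{i=1}^{4}\left(\sum_{a\in\mathcal{A}} p^{m_i}(a)^4\right)^{1/4}.
\]
Because $\|\cdot\|_4\le\|\cdot\|_2$ on $\mathcal{A}$, each factor is at most $\left(\sum_a p^{m_i}(a)^2\right)^{1/2} = e^{-H_2(p^{m_i})/2}\le e^{-H_*/2}$, and multiplying four such bounds yields $q\le e^{-2H_*}$; thus $P_{\mathrm{mid}}\le e^{-2LH_*}$. (Iterated Cauchy--Schwarz gives the same bound if one prefers to avoid H\"older.)

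To finish, I would multiply the three factors and observe that the block lengths telescope:
\[
2H_*\bigl[(j-1) + L + (N-j-L+1)\bigr] = 2H_*N,
\]
so $\mathbb{P}(T_j(m_1,m_2,m_3,m_4))\le e^{-2NH_*}$. The one genuinely substantive step is the four-fold collision bound $q\le e^{-2H_*}$; everything else is a routine factorization together with the uniform lower bound $F\ge H_*$ from Lemma \ref{Lemma:FtoH}.
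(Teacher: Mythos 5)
Your proof is correct, and its skeleton --- factor $T_j$ over disjoint blocks of coordinates and disjoint pairs of genomes, apply the identity behind (\ref{Eqn:BasicProb}), and convert each cross-entropy to $H_*$ via Lemma \ref{Lemma:FtoH} --- is the same as the paper's. The one genuine difference is the middle block. The paper never confronts the four-fold coincidence at all: it uses a two-block decomposition, bounding $\mathbb{P}(T_j)$ by the product of the four pairwise agreement probabilities $\mathbb{P}(X^{m_1}[1,j+L-1]=X^{m_3}[1,j+L-1])$, $\mathbb{P}(X^{m_2}[1,j+L-1]=X^{m_4}[1,j+L-1])$, $\mathbb{P}(X^{m_1}[j+L,N]=X^{m_4}[j+L,N])$, and $\mathbb{P}(X^{m_2}[j+L,N]=X^{m_3}[j+L,N])$; these four events are determined by disjoint collections of symbols (disjoint genomes or disjoint coordinate ranges), so the product bound is legitimate, and the exponents $2(j+L-1)H_*+2(N-j-L+1)H_*$ already telescope to $2NH_*$. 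In other words, the middle block contributes $e^{-L(F(p^{m_1},p^{m_3})+F(p^{m_2},p^{m_4}))}\le e^{-2LH_*}$ from just two of the pairwise equalities implied by the four-fold one, which is exactly the bound your generalized H\"older step produces. Your argument is an exact factorization rather than a relaxation, and the estimate $q\le\prod_i\|p^{m_i}\|_4\le e^{-2H_*}$ is a nice self-contained observation, but it is not where the factor of $2$ comes from --- that factor is already present in the pairwise structure --- so the extra machinery buys no improvement in the final bound. Both routes are sound; the paper's is slightly leaner because it reuses Lemma \ref{Lemma:FtoH} uniformly and needs no new inequality for the middle block.
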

\begin{proof}
For notation, recall that $X^m[i,k] = x^m_i \dots x^m_k$. Then we have
\begin{align*}
    \mathbb{P}\bigl( T_j(m_1,m_2,m_3,m_4) \bigr) & \leq \mathbb{P} \bigl( X^{m_1}[1,j+L-1] = X^{m_3}[1,j+L-1] \bigr) \\
    & \quad \cdot \mathbb{P}\bigl( X^{m_2}[1,j+L-1] = X^{m_4}[1,j+L-1] \bigr) \\
    & \quad \cdot \mathbb{P} \bigl( X^{m_1}[j+L,N] = X^{m_4}[j+L,N] \bigr) \\
    & \quad \cdot \mathbb{P}\bigl( X^{m_2}[j+L,N] = X^{m_3}[j+L,N] \bigr).
\end{align*}
Since $m_1,\dots,m_4$ are all distinct, we may apply (\ref{Eqn:BasicProb}), and we see that
\begin{align*}
    \mathbb{P}\bigl( T_j(m_1,m_2,m_3,m_4) \bigr) & \leq e^{-(j+L-1)F(p^{m_1}, \, p^{m_3})} \\
    & \quad \cdot e^{-(j+L-1)F(p^{m_2}, \, p^{m_2})} \\
    & \quad \cdot e^{-(N-j-L+1) F(p^{m_1}, \, p^{m_4})} \\
    & \quad \cdot e^{-(N-j-L+1) F(p^{m_2}, \, p^{m_3})}.
    %& \leq e^{-2N H_*},
\end{align*}
Then by Lemma \ref{Lemma:FtoH} and our hypothesis that $H_2(p^m) \geq H_*$ for all $m$, we see that % to get the last inequality.
\begin{equation*}
\mathbb{P}\bigl( T_j(m_1,m_2,m_3,m_4) \bigr) \leq e^{-2N H_*}.
\end{equation*}
\end{proof}

The following combinatorial lemma underlies our non-identifiability result. For a word $w \in \mathcal{A}^{k}$ of length $k$, we let $|w| = k$. 
\begin{lemma} \label{lem3.4}
 Let $(M,N,L,P)$ be a metagenomics tuple. Suppose there exists $2 \leq j \leq N-L$ and $m \neq m'$ such that $X^m = awb$ and $X^{m'} = cwd$, where $|a| = |c| =j-1$, $|w|=L$, $a\neq c$, and $b\neq d$. Further suppose that $X$ is not in $\bigcup_{m_3,m_4} T_j(m,m',m_3,m_4)$. Then $X$ is not identifiable.
\end{lemma}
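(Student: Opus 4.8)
The plan is to prove non-identifiability by exhibiting a second sample $X'$, distinct from $X$ up to relabeling, that produces exactly the same multi-set of reads; by the definition of identifiability this is precisely what it means for $X$ to fail to be identifiable. The natural candidate is obtained by \emph{swapping the suffixes} of the two genomes past their common window $w$. Concretely, I would let $X'$ agree with $X$ in every coordinate except
\begin{equation*}
(X')^m = awd \qquad \text{and} \qquad (X')^{m'} = cwb.
\end{equation*}
Both of these strings have length $(j-1) + L + (N-j-L+1) = N$, so $X'$ is again a legitimate sample of $M$ genomes of length $N$.

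The crux is verifying $\mathcal{R}(X') = \mathcal{R}(X)$. For a string $s$ write $\mathcal{R}(s)$ for its multi-set of length-$L$ reads, and $\mathcal{R}_{<j}(s)$ for those reads starting at a position strictly less than $j$. I would argue by decomposing the windows of each affected genome at the position where $w$ begins. Since $2 \le j \le N-L$, both the prefix block (positions $1$ through $j-1$) and the suffix block (positions $j+L$ through $N$) are nonempty. In $X^m = awb$, every window starting in $\{1,\dots,j-1\}$ lies inside the block $aw$ and is therefore unaffected by replacing $b$, while every window starting in $\{j,\dots,N-L+1\}$ lies inside the block $wb$ and so these windows are exactly the reads of $wb$ (the window at position $j$ being $w$ itself). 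The identical decomposition applies to $X^{m'}=cwd$ with blocks $cw$ and $wd$, and to the corresponding genomes of $X'$. Hence the reads contributed by coordinates $m$ and $m'$ are
\begin{equation*}
\mathcal{R}_{<j}(aw)\ \uplus\ \mathcal{R}(wb)\ \uplus\ \mathcal{R}_{<j}(cw)\ \uplus\ \mathcal{R}(wd)
\end{equation*}
in $X$, whereas in $X'$ they are the same four sub-multi-sets but with $\mathcal{R}(wb)$ and $\mathcal{R}(wd)$ exchanged between the two genomes. Because $\mathcal{R}$ pools all reads without labels and the prefix blocks $aw,cw$ are left untouched, the two pooled multi-sets coincide; every other genome contributes identically. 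Thus $\mathcal{R}(X')=\mathcal{R}(X)$.

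It then remains to check $X'\notin[X]$. Since $X'$ and $X$ agree in all coordinates outside $\{m,m'\}$, those coordinates contribute equal multiplicities to the two genome multi-sets and cancel, so $X'\in[X]$ would force the two-element multi-sets $\{awb,cwd\}$ and $\{awd,cwb\}$ to be equal. There are only two ways to match their elements: either $awb=awd$ and $cwd=cwb$, or $awb=cwb$ and $cwd=awd$; the former forces $b=d$ and the latter forces $a=c$, each contradicting a hypothesis. Therefore $X'\notin[X]$, and combined with $\mathcal{R}(X')=\mathcal{R}(X)$ this shows $X$ is not identifiable.

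The only delicate point I anticipate is the boundary bookkeeping in the read computation: one must confirm that the window decomposition is exhaustive, that the block lengths are genuinely nonnegative (which is exactly where $2\le j\le N-L$ is used), and that the single window equal to $w$ is accounted for consistently on both sides. I would also note that the argument invokes only the structural hypothesis together with $a\neq c$ and $b\neq d$; the assumption $X\notin\bigcup_{m_3,m_4}T_j(m,m',m_3,m_4)$ is not actually needed for the switching construction, since the multiplicities of the unchanged coordinates cancel regardless, and it is presumably retained only to interface cleanly with the probabilistic estimates (via Lemma \ref{lem3.3}) in the proof of Theorem \ref{Thm2}.
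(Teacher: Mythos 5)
Your proof is correct and uses the same core construction as the paper: swap the suffixes to form $\tilde{X}^m = awd$, $\tilde{X}^{m'} = cwb$, check that the multi-set of reads is unchanged, and check that $\tilde{X} \notin [X]$. Your window-by-window verification of $\mathcal{R}(\tilde X)=\mathcal{R}(X)$ is a more careful version of what the paper asserts in one sentence. The one point where you genuinely diverge is your claim that the hypothesis $X \notin \bigcup_{m_3,m_4} T_j(m,m',m_3,m_4)$ is not needed; the paper's proof explicitly invokes it to rule out a permutation carrying $X$ to $\tilde X$. Your multi-set cancellation argument is valid: since $X$ and $\tilde X$ agree in every coordinate outside $\{m,m'\}$, equality of the genome multi-sets reduces to $\{awb,cwd\}=\{awd,cwb\}$, which forces $a=c$ or $b=d$ regardless of what the other genomes look like, so $a\neq c$ and $b\neq d$ already suffice. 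You are also right that the extra hypothesis is harmless to keep, as it is what the surrounding probabilistic estimates control; but as a statement about this lemma alone, your observation is a small sharpening of the paper's argument.
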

\begin{proof}
Consider a sample $X$ satisfying the hypotheses of the lemma. Define $\tilde{X}$ to be a sample such that $\tilde{X}^m = awd$ and $\tilde{X}^{m'} = cwb$. Since $|w| =L$, the samples $X$ and $\tilde{X}$ have the same read set. However, by the hypotheses that $a \neq c$, $b \neq d$, and $X$ is not in $\bigcup_{m_3,m_4} T_j(m,m',m_3,m_4)$, there is no permutation of the genomes in $X$ that will yield $\tilde{X}$. Hence $X$ is not identifiable.
%Let $X^M$ be a set of genomes from which a multi-set of reads is produced. 
%Let $w$ be a repeated segment of length $L-1$ starting at position $j$ on two separate genomes. Let these $w$ segments be flanked by L-1 segments A and C, and D and E, respectively. By switching either A and D or C and E, we can imagine an entirely different set of genomes, $X^{M'}$ (see Figure 2). From this $X^{M"}$, the same multi-set of reads of length L will be produced. Thus, two possible sets of genomes may be produced from the same multi-set of reads and any $X^M$ containing a repeated segment of length $L-1$ starting at position $j$ on two separate genomes will be non-identifiable.
\begin{figure}
    \centering
\includegraphics[width=10 cm]{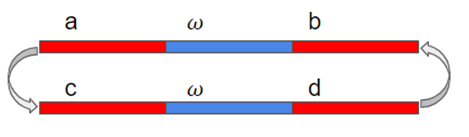}\ \label{Fig.2}
    \caption{ Non-Identifiability Configuration. By swapping the segments labeled a and c or b and d, a unique set of genomes, $\tilde{X}$, arises from the set of reads produced by X.}
\end{figure}
\end{proof}

At this point, let us recall Chebychev's inequality: if $Z$ is any real-valued random variable with mean $\mu$ and standard deviation $\sigma >0$, then for any $k >0$ we have
\begin{equation}
\mathbb{P}(|X-\mu|\geq k\sigma)\leq \frac{1}{k^2}.
\end{equation}

In the proof of our non-identifiability result, we make use of the second moment method, which we formalize in the following lemma. We include a proof of this result for completeness.

\begin{lemma} \label{lem3.5}
Let $\{Z_n\}_{n=1}^{\infty}$ be a sequence of random variables taking values in the nonnegative integers. If
\begin{itemize}
    \item $\lim \mathbb{E}[Z_n] = \infty$, and 
    \item $\lim \frac{\Var[Z_n]}{\mathbb{E}[Z_n]^2} = 0$,
\end{itemize}
then
\begin{equation*}
   \lim \mathbb{P} \bigl( Z_n \geq 1 \bigr) = 1.
\end{equation*}
\end{lemma}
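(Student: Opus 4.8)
The plan is to invoke Chebyshev's inequality, recalled just above the lemma, together with the fact that $Z_n$ takes values in the nonnegative integers. Write $\mu_n = \mathbb{E}[Z_n]$ and $\sigma_n^2 = \Var[Z_n]$. The central observation is that for a nonnegative random variable the event $\{Z_n = 0\}$ forces $Z_n$ to deviate from its mean by at least $\mu_n$; that is, $\{Z_n = 0\} \subseteq \{|Z_n - \mu_n| \geq \mu_n\}$. The whole proof is then the standard second moment method: bound $\mathbb{P}(Z_n = 0)$ by the ratio in the second hypothesis and take complements.

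First I would dispose of the degenerate case. If $\sigma_n = 0$, then $Z_n = \mu_n$ almost surely; since $\mu_n \to \infty$ by the first hypothesis, for all large $n$ we have $\mu_n \geq 1$, so that $\mathbb{P}(Z_n \geq 1) = 1$ for such $n$, which is consistent with the claimed limit. Hence I may restrict attention to those $n$ with $\sigma_n > 0$, where Chebyshev's inequality as stated (requiring $\sigma > 0$) applies.

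For $\sigma_n > 0$, I would apply Chebyshev's inequality with $k = \mu_n/\sigma_n > 0$, which yields
\[
\mathbb{P}(Z_n = 0) \leq \mathbb{P}\bigl(|Z_n - \mu_n| \geq \mu_n\bigr) = \mathbb{P}\bigl(|Z_n - \mu_n| \geq k\sigma_n\bigr) \leq \frac{1}{k^2} = \frac{\sigma_n^2}{\mu_n^2} = \frac{\Var[Z_n]}{\mathbb{E}[Z_n]^2}.
\]
By the second hypothesis the right-hand side tends to zero, so taking complements gives $\mathbb{P}(Z_n \geq 1) = 1 - \mathbb{P}(Z_n = 0) \to 1$, as desired.

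I do not expect any genuine obstacle here: the argument is the classical second moment method, and the lemma is stated as a self-contained tool for use in the non-identifiability proof. The only point requiring care is the degenerate case $\sigma_n = 0$, where the stated form of Chebyshev's inequality does not directly apply and the conclusion must instead be read off from $\mu_n \to \infty$.
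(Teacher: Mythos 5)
Your proposal is correct and follows essentially the same route as the paper: bound $\mathbb{P}(Z_n=0)$ by noting that $Z_n=0$ forces a deviation of size $\mathbb{E}[Z_n]$ from the mean, apply Chebyshev's inequality with $k=\mathbb{E}[Z_n]/\Var[Z_n]^{1/2}$, and conclude from the hypothesis that $\Var[Z_n]/\mathbb{E}[Z_n]^2\to 0$. Your separate treatment of the degenerate case $\sigma_n=0$ is a small point of extra care that the paper's proof silently skips, but it does not change the argument.
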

\begin{proof}
By applying Chebychev's inequality, we get:
\begin{align*}
    0 \leq \lim \mathbb{P}(Z_n = 0) & = \lim \mathbb{P}( \mathbb{E}[Z_n] - Z_n \geq \mathbb{E}[Z_n] ) \\
    & = \lim \mathbb{P} \left( \mathbb{E}[Z_n] - Z_n \geq \frac{\mathbb{E}[Z_n]}{\Var[Z_n]^{1/2}} \Var[Z_n]^{1/2} \right) \\
    & \leq \lim \frac{\Var[Z_n]}{\mathbb{E}[Z_n]^2} = 0.
\end{align*}
Thus, we obtain that $\mathbb{P}(Z_n \geq 1)$ tends to one.
\end{proof}

\section{Proof of identifiability result}
This section is devoted to proving Theorem \ref{Thm1}.
We begin with two lemmas in which we estimate the probability of having at least one repeat of length $L-1$ in a random sample.

\begin{lemma} \label{Lemma1}
Suppose $(M,N,L,P)$ is a metagenomics problem such that for all $1 \leq m \leq M$, we have $H_2(p^m) \geq H_*$. % and $F(p^m,p^{m'}) \leq F_*$. 
%Suppose for all $m \in M$, we have $H_2(P^m) \geq H_*$ and $F(P^m,P^n) \leq F_*$.
%On a set, $X$, of $M$ genomes, the probability of having an $L-1$ repeat is split into two cases: 
Let $E_1$ denote the event that there exists an $(L-1)$-repeat $(m,i), (m,j)$ with $i < j \leq i+(L-2)$ (meaning that the repeated string overlaps with itself). Then
    \begin{equation}
        \mathbb{P}(E_1)\ \leq \ (MNL)e^{-(L-1)H_*/2}.
    \end{equation}
    %Where $H_*$ is the smallest second order Renyi entropy of the set of sequences, $X$.
    \end{lemma}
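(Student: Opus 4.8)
The plan is to prove the bound by a union bound over the genome index $m$ and the pair of starting positions $(i,j)$, controlling the probability of each individual self-overlapping repeat. Fix $m$, write $\ell = L-1$, and set $d = j-i \in \{1,\dots,\ell-1\}$. The event $S^{m,m}_{\ell}(i,j)$ is exactly the condition $x^m_{i+t} = x^m_{i+d+t}$ for $t=0,\dots,\ell-1$; that is, the block $X^m[i,\,i+\ell+d-1]$ is periodic with period $d$. The essential difficulty, and the reason we cannot simply invoke (\ref{Eqn:BasicProb}), is that the two length-$\ell$ windows overlap, so their symbols are far from independent and the clean product formula is unavailable.

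To handle this, I would decompose the $\ell+d$ positions $i,i+1,\dots,i+\ell+d-1$ according to their residue modulo $d$. The periodicity constraints link $x^m_k$ with $x^m_{k+d}$, so within each residue class the constraints force all symbols to be equal, while distinct residue classes involve disjoint position sets and are therefore independent. There are exactly $d$ nonempty classes; if the $r$-th class contains $c_r$ positions, then $\sum_{r} c_r = \ell+d$ with each $c_r \geq 1$, and the probability that the whole block is periodic factors as $\prod_{r} \sum_{a \in \mathcal{A}} p^m(a)^{c_r}$.

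The key estimate is the elementary inequality $\sum_{a} p(a)^{c} \leq \bigl(\sum_{a} p(a)^2\bigr)^{\lfloor c/2\rfloor} = e^{-\lfloor c/2\rfloor H_2(p)}$, which follows by setting $q_a = p(a)^2$ and using $q_a \leq \sum_b q_b$ to conclude $\sum_a q_a^k \leq \bigl(\sum_a q_a\bigr)^k$. Applying this to each chain and summing the exponents via
\begin{equation*}
\sum_{r} \left\lfloor \frac{c_r}{2} \right\rfloor \;\geq\; \sum_{r} \frac{c_r - 1}{2} \;=\; \frac{(\ell+d)-d}{2} \;=\; \frac{\ell}{2},
\end{equation*}
together with the hypothesis $H_2(p^m) \geq H_*$, yields $\mathbb{P}\bigl(S^{m,m}_{\ell}(i,j)\bigr) \leq e^{-\ell H_*/2} = e^{-(L-1)H_*/2}$ for every admissible triple $(m,i,j)$.

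Finally, I would close with the union bound. There are $M$ choices of $m$, at most $N$ choices of $i$, and at most $L$ choices of $j$ (since $d = j-i$ ranges over $\{1,\dots,L-2\}$), so the number of self-overlapping $(L-1)$-repeats is at most $MNL$, which gives $\mathbb{P}(E_1) \leq (MNL)\, e^{-(L-1)H_*/2}$. I expect the chain decomposition and its accompanying power-sum inequality to be the main obstacle, since this is precisely where the overlap is converted into the factor of $1/2$ in the exponent; the remaining counting is routine.
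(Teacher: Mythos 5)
Your proof is correct and follows the same strategy as the paper: a per-pair bound of $e^{-(L-1)H_2(p^m)/2}$ on each self-overlapping repeat followed by a union bound over the at most $MNL$ admissible triples $(m,i,j)$. The only difference is that the paper imports the per-pair estimate directly from \cite[Lemma 11]{motahari2013information}, whereas you prove it from scratch via the period-$d$ residue-class decomposition and the power-sum inequality $\sum_{a} p(a)^{c} \leq \bigl(\sum_{a} p(a)^2\bigr)^{\lfloor c/2\rfloor}$; that self-contained argument is sound and is essentially the content of the cited lemma.
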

\begin{proof}
For $1 \leq m \leq M$ and $1 \leq i < j \leq i+L \leq N-L$, recall that $S^{m,m}_{L-1}(i,j)$ is the event that the reads at positions $i$ and $j$ on genome $X^m$ are equal, i.e., $x^m_{i} \dots x^m_{i+L-2} = x^m_{j} \dots x^m_{j+L-2}$.
The probability of this event occurring can be estimated as follows (see \cite[Lemma 11]{motahari2013information} for a proof):
\begin{equation*}
   \mathbb{P}\bigl(S^{m,m}_{L-1}(i,j)\bigr)\ \leq \ e^{-(L-1)H_2(P^m)/2}.
\end{equation*}
Let $E_1$ denote the event that there exists $m$ and $(i,j)$ as above so that $S^{m,m}_{L-1}(i,j)$ occurs  (i.e., $E_1 = \bigcup_m \bigcup_{i,j} S^{m,m}_{L-1}(i,j)$). Then by the previous display, the hypothesis that each $H_2(p^m) \geq H_*$, and the union bound, we estimate the probability of $E_1$ as follows:
\begin{equation*}
   \mathbb{P}(E_1)\ \leq \ (MNL)e^{-(L-1)H_*/2}.
\end{equation*}
\end{proof}
A repeat satisfying the hypotheses of this lemma is said to be \textit{overlapping}. Any other repeat is said to be \textit{non-overlapping}.

\begin{lemma}  \label{Lemma2}
Suppose $(M,N,L,P)$ is a metagenomics problem such that for all $1 \leq m, m' \leq M$, we have $F(p^m,p^{m'}) \geq F_*$. 
Let $E_2$ denote the event that there is a non-overlapping repeat of length $L-1$. Then 
    \begin{equation}
        \mathbb{P}(E_2) \leq M^2 N^2 (e^{-(L-1)F_*}).
    \end{equation}
\end{lemma}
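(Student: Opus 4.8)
The plan is to prove this by a first-moment (union bound) argument, parallel to the proof of Lemma \ref{Lemma1}, but now exploiting the independence that is available precisely because the repeats in question are non-overlapping. First I would decompose $E_2$ as a union of single-repeat events:
\begin{equation*}
E_2 = \bigcup S^{m,m'}_{L-1}(i,i'),
\end{equation*}
where the union ranges over all pairs $(m,i) \neq (m',i')$ with $1 \leq m, m' \leq M$ and $1 \leq i, i' \leq N-L+2$ whose two length-$(L-1)$ windows do not overlap; explicitly, either $m \neq m'$, or $m = m'$ with $|i-i'| \geq L-1$.

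The main step is to estimate the probability of a single such event. A repeat is non-overlapping exactly when the two windows $X^m[i, i+L-2]$ and $X^{m'}[i', i'+L-2]$ occupy disjoint sets of positions---either on different genomes, or on the same genome but separated by at least $L-1$---and are therefore independent. This is precisely the situation covered by (\ref{Eqn:BasicProb}), so
\begin{equation*}
\mathbb{P}\bigl( S^{m,m'}_{L-1}(i,i') \bigr) = e^{-(L-1)F(p^m, p^{m'})} \leq e^{-(L-1)F_*},
\end{equation*}
where the inequality uses the hypothesis $F(p^m, p^{m'}) \geq F_*$ for all $m, m'$.

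I would then finish with the union bound. There are at most $M$ choices for each of $m$ and $m'$ and at most $N$ choices for each of $i$ and $i'$, so the number of (ordered) non-overlapping pairs is at most $M^2 N^2$, giving
\begin{equation*}
\mathbb{P}(E_2) \leq \sum \mathbb{P}\bigl( S^{m,m'}_{L-1}(i,i') \bigr) \leq M^2 N^2 e^{-(L-1)F_*},
\end{equation*}
which is the claimed bound. This count is intentionally crude: it double-counts each unordered repeat and ignores the $L$-dependence of the index ranges, both of which only weaken the inequality and so are harmless.

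I do not expect a genuine obstacle here, as the computation is routine once the decomposition is set up. The only point that requires care is conceptual rather than computational: one must recognize that ``non-overlapping'' is exactly the hypothesis needed to invoke the exact probability formula (\ref{Eqn:BasicProb}). For an overlapping repeat the two windows share symbols, the product formula breaks down, and one is forced into the weaker self-overlap estimate that underlies Lemma \ref{Lemma1}; keeping these two regimes separate is the whole reason for splitting the repeat count into $E_1$ and $E_2$ in the first place.
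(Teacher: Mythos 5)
Your proposal is correct and follows essentially the same route as the paper: decompose $E_2$ into the single-repeat events $S^{m,m'}_{L-1}(i,i')$ over non-overlapping index pairs, use the independence of disjoint windows to apply (\ref{Eqn:BasicProb}) exactly, bound each term by $e^{-(L-1)F_*}$ via the hypothesis on $F$, and finish with a crude $M^2N^2$ union bound. No substantive differences.
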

\begin{proof}
 For $1 \leq m \leq  m' \leq M$ and $1 \leq i , j \leq N-L$, recall that $S^{m,m'}_{L-1}(i,j)$ is the event %let $E_2$ be the event 
 that the $(L-1)$-strings at position $i$ on $X^m$ and position $j$ on $X^{m'}$ are equal, i.e., $x^m_i \dots x^m_{i+L-2} = x^{m'}_j \dots x^{m'}_{j+L-2}$
. These strings are non-overlapping precisely when either $m \neq m'$ or $m = m'$ and $|i-j| > L-1$. Suppose that one of these conditions holds. 
Since the strings are non-overlapping, we may apply (\ref{Eqn:BasicProb}), and we have that
\begin{equation*}
    \mathbb{P}\left( S^{m,m'}_{L-1}(i,j) \right) = e^{-(L-1)F(p^m, \, p^{m'})}.
\end{equation*}
Next notice that $E_2 = \bigcup_{m, m'} \bigcup_{i,j} S^{m,m'}_{L-1}(i,j)$, where the union is taken over the non-overlapping indices.
Then by our hypothesis that $F(p^m, p^{m'}) \geq F_*$ for all $m$ and $m'$ and by the union bound, we have
\begin{equation*}
    \mathbb{P}(E_2) \leq M^2 N^2 e^{-(L-1) F_*}
\end{equation*}
%Let $E^{m,m'}_2$ denote the event that some sequences of length $L$, $x^m_i \dots x^m_{i+L-1}$ and $x^{m'}_j \dots x^{m'}_{j+L-1}$, $X \ \subseteq \ X^m, \ X^{m'} \ \in \ X$, respectively. The probability of observing $x^m_{i+L-1} = x^{m'}_j \dots x^{m'}_{j+L-1}$ is given by $\mathbb{P}(E^{m,m'}_2)=\sum_ip_i^mp_j^{m'}$. Let $\mathbb{P}(E^{m,m'}_2)$ be denoted $f(p^m,p^n)$. Thus, $\mathbb{P}(E^{m,m'}_2)= f(p^m,p^n)^l$. By letting $F(p^m,p^n)= -log(f(p^m,p^n))$, $\mathbb{P}(E^{m,m'}_2)$ can be rewritten as:
%\begin{equation}
%\mathbb{P}(E^{m,m'}_2)\leq e^{-LF(p^m,p^n)}.
%\end{equation}
%Let $E_2$ denote this event occurring over all $M$ sequences. The probability of this is given by:
%\begin{equation}
%    \mathbb{P}(E_2)\leq \frac{M^2}{2}\left(\frac{N^2}{2}-NL\right)(e^{-LF_*}).
%    \end{equation}
\end{proof}

Now we are ready to proceed with the proof of Theorem \ref{Thm1}.

\vspace{2mm}

\begin{PfofTheorem1}

%\begin{proof}[Proof 1.1]
To begin, we fix $n$ and consider the metagenomics problem $(M,N,L,P) = (M_n, N_n, L_n, P_n)$. Recall that by Lemma \ref{lem3.1}, if all $L-1$ subsequences of $X$ are unique, then identifiability holds. 
Let $E^c$ denote the event that all $(L-1)$-subsequences are unique, and let $E$ be the complement of this event. By Lemma \ref{lem3.1}, we have that $E^c \subset I$. Thus, $I^c \  \subset \ E$. Then by monotonicity, we have $\mathbb{P}(I^c) \ \leq \ \mathbb{P}(E)$, making $\mathbb{P}(E)$ an upper bound on the probability of non-identifiability.
In the remainder of this proof, we estimate $\mathbb{P}(E)$ and then show that it tends to zero under the hypotheses of the theorem.

First, note that $E$ is the event that there exists an $(L-1)$-repeat. % not all possible $L-1$ subsequences in $X$ are unique. 
Then we have $E = E_1 \cup E_2$, where $E_1$ and $E_2$ are defined in the previous two lemmas.
By Lemma \ref{Lemma1}, we have that
\begin{equation*}
    \mathbb{P}(E_1)
     \leq \ (C_1 MNL)e^{-LH_*/2}, %= (C_1MNL)e^{-LH_*/2}.
    \end{equation*}
    where $C_1 = e^{H_*/2} >1$ is a constant (independent of $n$). % is a constant such that $C_1>1$.
    %\begin{equation}
    By Lemma \ref{Lemma2}, we have that
    \begin{equation*}
\mathbb{P}(E_2)\leq\ C_2M^2N^2 \ e^{-LF_*}
\end{equation*}
where $C_2 = e^{F_*}/4$ is a constant (independent of $n$). %such that $C_1>0$. 
Then by the union bound, we see that
\begin{align*}
    \mathbb{P}(E)&=\mathbb{P}(E_1\cup E_2) \\
    &\leq C_1(MNL)e^{-LH_*/2}\ + \ C_2M^2N^2 \ e^{-LF_*}.
\end{align*}

By Lemma \ref{lem3.4}, we have that $F_* \geq H_*.$ Thus,
\begin{equation} \label{Eqn:EstimateE}
    \mathbb{P}(E)\leq (C_1MNL)e^{-LH_*/2}\ + \ C_2M^2N^2 \ e^{-LH_*}.
\end{equation}
By our hypothesis, we have
\begin{equation*}
    L \geq \frac{2(1+\epsilon)}{H_*} \log(MN).
\end{equation*}
Combining this estimate with (\ref{Eqn:EstimateE}), we now let $n$ tend to infinity and observe that $\mathbb{P}(E)$ tends to zero. Hence we have that $\mathbb{P}(I^c) \leq \mathbb{P}(E)$ tends to zero, and therefore $\mathbb{P}(I)$ tends to one.
%\end{proof}
\end{PfofTheorem1}

\section{Proof of non-identifiability result}
In this section we prove our second main result, Theorem \ref{Thm2}. The main proof relies on a second moment argument, which establishes that certain ``bad" configurations will arise with probability tending to one. Before getting to the main proof, we require some notations and lemmas regarding these bad configurations.

Throughout this section we suppress the dependence on $n$ in our notation. For $j \in \{2,\dots,N-L\}$ and $1 < m < m' < N-L$, we let  and let $B^{m,m'}_j$ be the event that there is an $L$ repeat at location $j$ on the genomes $X^m$ and $X^{m'}$ (i.e., $B^{m,m'}_j = S^{m,m'}_{L}(j,j)$ in the notation of Section \ref{sect3}). %In this event, the multiset of reads \textcolor{purple}{use $\mathcal{R}$ throughout}, $R$, is non-identifiable, thus, $B$ $\subset \ I^c$. 
As we show below, if $B^{m,m'}_j$ occurs, then it is likely that $X$ is non-identifiable. %It follows that $I \ \subset \ B^c$ and $\mathbb{P}(I) \ \subset \ \mathbb{P}(E^c)$. 
Our first goal is to use the second moment method to give conditions under which there are some indices for which $B^{m,m'}_j$ occurs with probability tending to one.
%It is important to note that be probability of $B$ is given by:
%\begin{equation*}
%    \mathbb{P}(B) = e^{-L_n F(p_n^m, p_n^{m'})}.
%\end{equation*}
Towards that end, suppose that $\delta > 0$ is fixed and $L \leq \delta N$ (as in Theorem \ref{Thm2}).  Choose $\eta >0$ such that $\eta < \min(\delta/2, (1-\delta)/2)$, and let 
\begin{equation} \label{Eqn:Z_n}
    Z_n = \sum_{1 \leq m <  m' \leq M_n} \sum_{\eta N \leq j \leq (1-\eta) N - L} \chi_{B^{m,m'}_j},
\end{equation}
where $\chi_E$ denotes the indicator function of the event $E$.

\begin{lemma} \label{lem5.1}
Under the hypotheses of Theorem 2, there exists a constant $C_3>0$ such that for all large $n$, we have
\begin{equation*}
    \mathbb{E}[Z_n] \geq C_3 M^2 N e^{-L F^*}.
\end{equation*}
In particular,
\begin{equation*}
   \lim \mathbb{E}[Z_n] = \infty.
\end{equation*}
\end{lemma}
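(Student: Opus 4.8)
The plan is to compute $\mathbb{E}[Z_n]$ directly by linearity of expectation and then bound it from below. Since $Z_n$ is a sum of indicator functions, we have
\begin{equation*}
\mathbb{E}[Z_n] = \sum_{1 \leq m < m' \leq M} \sum_{\eta N \leq j \leq (1-\eta)N - L} \mathbb{P}(B^{m,m'}_j).
\end{equation*}
The key point is that each event $B^{m,m'}_j = S^{m,m'}_{L}(j,j)$ is a non-overlapping repeat of length $L$ between distinct genomes $X^m$ and $X^{m'}$ (distinct because $m \neq m'$), so by formula (\ref{Eqn:BasicProb}) we have $\mathbb{P}(B^{m,m'}_j) = e^{-L F(p^m, p^{m'})}$. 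Using hypothesis (4) of Theorem \ref{Thm2}, namely $F(p^m, p^{m'}) \leq F^*$, each such probability is bounded below by $e^{-L F^*}$.

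Next I would count the number of terms in the double sum. The inner sum over $j$ ranges over integers in the interval $[\eta N, (1-\eta)N - L]$, which has length $(1 - 2\eta)N - L$. Since $\eta < (1-\delta)/2$ and $L \leq \delta N$, this length is at least $(1 - 2\eta - \delta)N > 0$ for large $n$, so the number of valid $j$ is at least some positive constant multiple of $N$; call this constant $c_\eta > 0$. The outer sum over pairs $(m,m')$ with $m < m'$ contributes $\binom{M}{2} = M(M-1)/2$ terms, which is at least $M^2/4$ once $M \geq 2$ (hypothesis (1)). Multiplying these counts by the per-term lower bound $e^{-L F^*}$ gives
\begin{equation*}
\mathbb{E}[Z_n] \geq \frac{M^2}{4} \cdot c_\eta N \cdot e^{-L F^*} = C_3 M^2 N e^{-L F^*},
\end{equation*}
where $C_3 = c_\eta / 4 > 0$ is a constant depending only on $\eta$ (hence on $\delta$), establishing the first claim.

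For the second claim, I would substitute the bound on $L$ from hypothesis (6). Since $C \leq 1/F^*$, we have $L \leq C(1-\epsilon)\log(MN) \leq (1-\epsilon)\log(MN)/F^*$, so $L F^* \leq (1-\epsilon)\log(MN)$ and therefore $e^{-LF^*} \geq (MN)^{-(1-\epsilon)}$. Combining this with the lower bound just obtained yields
\begin{equation*}
\mathbb{E}[Z_n] \geq C_3 M^2 N (MN)^{-(1-\epsilon)} = C_3 M^{1+\epsilon} N^{\epsilon},
\end{equation*}
which tends to infinity as $n \to \infty$ because $N \to \infty$ (hypothesis (2)) and $M \geq 2$. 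I do not anticipate a genuine obstacle here, as the argument is a routine first-moment computation; the only point requiring care is verifying that the index range for $j$ is nonempty and of size proportional to $N$, which is exactly what the choice $\eta < \min(\delta/2, (1-\delta)/2)$ in the setup guarantees. The more substantial work will come in the companion variance estimate, where the correlations between the events $B^{m,m'}_j$ must be controlled, but that lies beyond the present lemma.
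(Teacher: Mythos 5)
Your proposal is correct and follows essentially the same route as the paper's proof: linearity of expectation, the exact probability $e^{-LF(p^m,p^{m'})}$ from the independence formula, the count of $\binom{M}{2}$ pairs and roughly $(1-2\eta-\delta)N$ values of $j$, and then hypothesis (6) to force divergence. Your final step is in fact slightly more explicit than the paper's (you write out $\mathbb{E}[Z_n] \geq C_3 M^{1+\epsilon}N^{\epsilon}$), and your bound $\binom{M}{2} \geq M^2/4$ is the correct form of the inequality for $M \geq 2$.
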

\begin{proof}
First, we note that the number of pairs $(m,m')$ such that $1 \leq m < m' \leq M$ is equal to $\binom{M}{2}$, and we have $\binom{M}{2} \geq \frac{1}{2} M^2$ for all $M \geq 2$. Next, we observe that there are $(1-2\eta)N - L$ values of $j$ such that $\eta N \leq j \leq (1-\eta)N-L$, and for $L \leq \delta N$, we have $(1-2\eta)N-L \geq (1-2\eta - \delta)N$ for all large enough $N$. Now let $C_3 = \frac{1-\delta-2\eta}{2}$. Then we have 
\begin{align*}
    \mathbb{E}[Z_n] & = \sum_{1 \leq m <  m' \leq M_n} \sum_{1 \leq j \leq N-L} \mathbb{E}[ \chi_{B^{m,m'}_j} ]\\
    & = \sum_{1 \leq m <  m' \leq M_n} \sum_{\eta N \leq j \leq (1-\eta) N - L} e^{-L F(p_n^m, p_n^{m'})} \\
    & \geq C_3 M^2 N e^{-LF^*}.
\end{align*}
By our hypothesis that
\begin{equation*}
    L \leq \frac{1-\epsilon}{F^*} \log(MN),
\end{equation*}
we see that $\lim \mathbb{E}[Z_n] = \infty$. 
\end{proof}

\begin{lemma} \label{lem5.2}
Under the hypotheses of Theorem 2, 
\begin{equation*}
   \lim \frac{\Var[Z_n]}{\mathbb{E}[Z_n]^2} = 0.
\end{equation*}
\end{lemma}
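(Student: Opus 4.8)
The plan is to apply the second moment method (Lemma \ref{lem3.5}) to the random variable $Z_n$, for which Lemma \ref{lem5.1} already supplies $\lim \mathbb{E}[Z_n] = \infty$. Since $\Var[Z_n] = \mathbb{E}[Z_n^2] - \mathbb{E}[Z_n]^2$, it suffices to show that $\mathbb{E}[Z_n^2] = (1+o(1))\mathbb{E}[Z_n]^2$, or more precisely that $\Var[Z_n]/\mathbb{E}[Z_n]^2 \to 0$. Expanding the square, I would write
\begin{equation*}
\mathbb{E}[Z_n^2] = \sum_{(m,m',j)} \sum_{(\bar m, \bar m', \bar\jmath)} \mathbb{P}\bigl(B^{m,m'}_j \cap B^{\bar m,\bar m'}_{\bar\jmath}\bigr),
\end{equation*}
where both triples range over the index set from \eqref{Eqn:Z_n}. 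The standard strategy is to split this double sum according to how the two index triples interact, and to show that the dominant contribution matches $\mathbb{E}[Z_n]^2$ while every other contribution is negligible by comparison.

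The main step is to classify the pairs of events by the overlap structure of their genome labels $\{m,m'\}$ and $\{\bar m, \bar m'\}$ (and, within a label class, by the relationship between $j$ and $\bar\jmath$). The cleanest decomposition is into three regimes: (i) the labels are disjoint, i.e. $\{m,m'\} \cap \{\bar m,\bar m'\} = \emptyset$; (ii) the labels share exactly one genome; and (iii) the label sets coincide. In regime (i), the two repeat events involve four distinct genomes and are therefore \emph{independent}, so $\mathbb{P}(B^{m,m'}_j \cap B^{\bar m,\bar m'}_{\bar\jmath}) = \mathbb{P}(B^{m,m'}_j)\mathbb{P}(B^{\bar m,\bar m'}_{\bar\jmath})$; summing these contributions gives exactly the ``diagonal-free'' part of $\mathbb{E}[Z_n]^2$, and one checks that the omitted (non-disjoint-label) terms of $\mathbb{E}[Z_n]^2$ are lower order. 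The remaining two regimes constitute $\Var[Z_n]$ up to the independent cross terms, and the goal is to bound their total contribution by $o(\mathbb{E}[Z_n]^2)$.

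For the bookkeeping I would estimate the size of each regime against $\mathbb{E}[Z_n]^2 \asymp M^4 N^2 e^{-2LF^*}$ (using Lemma \ref{lem5.1}). Regime (ii), sharing one label, contributes at most $O(M^3 N^2)$ index-pairs, each with probability at most $e^{-2(L-1)F_*}$ by \eqref{Eqn:BasicProb} applied to the two repeats (which, sharing only a single genome, factor into independent events on the disjoint portions); dividing by $M^4 N^2 e^{-2LF^*}$ yields a bound of order $M^{-1} e^{2L(F^*-F_*)}$, and one must verify this tends to $0$ using hypothesis (6). This is exactly where the definition $C = \min(1/F^*, 1/(2F^*-F_*))$ is needed: the factor $e^{2L(F^*-F_*)}$ is controlled precisely because $L \le C(1-\epsilon)\log(MN) \le \frac{1-\epsilon}{2F^*-F_*}\log(MN)$, so that $e^{2L(F^*-F_*)} = e^{2LF^* - 2LF_*}$ is dominated by $M$ up to the $\epsilon$ slack. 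Regime (iii), with matching labels, splits further according to whether the two $L$-windows $[j, j+L-1]$ and $[\bar\jmath, \bar\jmath+L-1]$ on the shared pair overlap: the non-overlapping sub-case again factorizes and is handled like regime (i) but with only $O(M^2 N^2)$ terms (negligible against $M^4$), while the overlapping sub-case forces $|j - \bar\jmath| < L$, giving only $O(M^2 N L)$ terms, each bounded by the probability of a single long repeat; using $L \le C(1-\epsilon)\log(MN) \le \frac{1-\epsilon}{F^*}\log(MN)$ and the union bound, this too is $o(\mathbb{E}[Z_n]^2)$.

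I expect the main obstacle to be the careful case analysis for overlapping repeats in regime (iii): when $[j, j+L-1]$ and $[\bar\jmath, \bar\jmath+L-1]$ intersect on the shared genome pair, the joint event $B^{m,m'}_j \cap B^{m,m'}_{\bar\jmath}$ does not factor, and one must instead bound it by the probability that a single string of length roughly $L + |j - \bar\jmath|$ is repeated, so that the exponent improves to compensate for the loss of independence. The bound $L \le \delta N$ from hypothesis (5), together with the restriction $\eta N \le j \le (1-\eta)N - L$ in the definition of $Z_n$, is what keeps all windows strictly interior and ensures the relevant segments have full independent length; hypothesis (3), $\log(M_n)/N_n \to 0$, guarantees that the combinatorial prefactors such as $M^3$ cannot overwhelm the exponential gains. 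Assembling the three regime bounds, each shown to be $o(\mathbb{E}[Z_n]^2)$, gives $\Var[Z_n]/\mathbb{E}[Z_n]^2 \to 0$, completing the proof.
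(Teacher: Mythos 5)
Your overall skeleton (second moment method, decomposition of the pairs $(B^{m,m'}_j, B^{\bar m,\bar m'}_{\bar\jmath})$ by how the label sets intersect) is the same as the paper's, but the way you handle regimes (ii) and (iii) contains a genuine gap. You count \emph{all} $O(M^3N^2)$ label-sharing pairs in regime (ii), bound each joint probability by $e^{-2(L-1)F_*}$, and compare to $\mathbb{E}[Z_n]^2\asymp M^4N^2e^{-2LF^*}$, arriving at $M^{-1}e^{2L(F^*-F_*)}$, which you claim tends to zero by hypothesis (6). It does not: hypothesis (6) gives $e^{2L(F^*-F_*)}\leq (MN)^{(1-\epsilon)\theta}$ with $\theta = \tfrac{2(F^*-F_*)}{2F^*-F_*}\in[0,1)$, and if $M$ is bounded (say $M=3$) while $F_*<F^*$ and $N\to\infty$, then $M^{-1}(MN)^{(1-\epsilon)\theta}\to\infty$. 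The same problem afflicts your regime (iii) non-overlapping bound $M^{-2}e^{2L(F^*-F_*)}$. The underlying issue is structural: when the two windows on the shared genome are at distance greater than $L$, the two events are genuinely \emph{independent}, so their contribution to $\mathbb{E}[Z_n^2]$ equals the corresponding term of $\mathbb{E}[Z_n]^2$ exactly and must be \emph{cancelled}, not bounded in absolute value --- an absolute-value bound on a block of $\mathbb{E}[Z_n]^2$ can never be $o(\mathbb{E}[Z_n]^2)$ when $F_*<F^*$. A secondary error: the claimed factorization $\mathbb{P}(B^{m,m'}_j\cap B^{m,\bar m'}_{\bar\jmath})\leq e^{-2(L-1)F_*}$ is false when the two $L$-windows overlap on the shared genome; conditioning on $X^m$, the overlapped positions contribute only one factor of $e^{-F_*}$ each, so the joint probability can be as large as $e^{-LF_*}$ (at $j=\bar\jmath$).

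The paper's proof repairs exactly this point by working with covariances from the start: $\Var[Z_n]$ is a sum of covariances, and the covariance vanishes whenever the label sets are disjoint \emph{or} $|j-k|>L$. This leaves only $O(M^3NL)$ surviving terms, each bounded by $e^{-LF_*}$ via Cauchy--Schwarz applied to the centered indicators, giving
\begin{equation*}
\frac{\Var[Z_n]}{\mathbb{E}[Z_n]^2}\;\leq\;\frac{C_4M^3NLe^{-LF_*}}{(C_3')^2M^4N^2e^{-2LF^*}}\;=\;\frac{C_4L}{(C_3')^2MN}\,e^{L(2F^*-F_*)}\;\leq\;\frac{C_4L}{(C_3')^2}\,(MN)^{-\epsilon}\;\to\;0,
\end{equation*}
using $L\leq \frac{1-\epsilon}{2F^*-F_*}\log(MN)$ and $L=O(\log(MN))$. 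Your plan becomes correct if you (a) discard all independent pairs as having zero covariance rather than bounding their joint probability, and (b) bound the remaining overlapping pairs by $e^{-LF_*}$ rather than $e^{-2LF_*}$; note that with these changes the extra factor you gain is $\frac{L}{N}$ from the positional constraint $|j-\bar\jmath|\leq L$, not a factor of $M^{-1}$ from the labels.
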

\begin{proof}
First, note that $B^{m,m'}_j$ is independent of $B_k^{m_0,m_0}$ whenever $\{m,m'\} \cap \{m_0,m_0'\} = \varnothing$ or $|j-k|> L$. Thus we have
\begin{align*}
    \Var[Z_n] & = \sum_{m,m'} \sum_j \sum_{m_0,m_0'} \sum_k \mathbb{E}\bigl[(\chi_{B^{m,m'}_j} - \mathbb{E}[\chi_{B^{m,m'}_j}])(\chi_{B^{m_0,m_0'}_k} - \mathbb{E}[\chi_{B^{m_0,m_0'}_k})]\bigr] \\
    & = \sum_{\{m,m'\} \cap \{m_0,m_0'\} \neq \varnothing} \sum_{|j-k| \leq L} \mathbb{E}\bigl[(\chi_{B^{m,m'}_j} - \mathbb{E}[\chi_{B^{m,m'}_j}])(\chi_{B^{m_0,m_0'}_k} - \mathbb{E}[\chi_{B^{m_0,m_0'}_k})]\bigr].% \\
    %& \leq 2M^3 N L e^{-L F_*}.
\end{align*}
By Cauchy-Schwartz and our hypothesis that $F(p^m,p^{m'}) \geq F_*$ for all $m$ and $m'$, we have that
\begin{equation*}
    \mathbb{E}\bigl[(\chi_{B^{m,m'}_j} - \mathbb{E}[\chi_{B^{m,m'}_j}])(\chi_{B^{m_0,m_0'}_k} - \mathbb{E}[\chi_{B^{m_0,m_0'}_k})]\bigr] \leq \left( \Var[\chi_{B^{m,m'}_j}] \Var[\chi_{B^{m_0,m'_0}_k]} \right)^{1/2} \leq e^{-L F_*}.
\end{equation*}
By the previous two displays, we see that
\begin{equation*}
     \Var[Z_n]  \leq C_4 M^3 N L e^{-L F_*},
\end{equation*}
where $C_4 >0$ is a constant (independent of $n$).
Also, by Lemma \ref{lem5.1}, there exists a constant $C_3' >0$ such that
\begin{equation*}
    \mathbb{E}[Z_n] \geq C_3' M^2 N e^{-LF^*}.
\end{equation*}
Combining the previous two inequalities, we see that 
\begin{equation} \label{Eqn:Dalmation}
    \frac{\Var[Z_n]}{\mathbb{E}[Z_n]^2} \leq \frac{C_4 M^3 NL e^{-LF_*}}{(C_3')^2M^4N^2 e^{-2LF^*}} = \frac{C_4L}{(C_3')^2MN} e^{-L(F_*-2F^*)}.
\end{equation}
By our hypothesis, we have that
\begin{equation*}
    L \leq \frac{1-\epsilon}{2F^* - F_*} \log(MN),
\end{equation*}
%By combining the two previous inequalities, we see that
%\begin{equation*}
%    \frac{\Var[Z_n]}{\mathbb{E}[Z_n]^2} \leq \frac{C' \log(MN)}{(MN)^{\epsilon}},
%\end{equation*}
and as a result we obtain that the right-hand side of (\ref{Eqn:Dalmation}) tends to zero as $n$ tends to infinity.
\end{proof}

Let $1 \leq m < m' \leq M$ and $2 \leq j \leq N-L$. Let $E^{m,m'}_j$ be the event that $X^m = awb$ and $X^{m'} = cwd$, where $|a| = |c| =j-1$, $|w|=L$, $a\neq c$, $b\neq d$, and $X$ is not in $\bigcup_{m_3,m_4} T_j(m,m',m_3,m_4)$.
\begin{lemma}\label{lem5.3}
Under the hypotheses of Theorem 2,
\begin{equation*}
    \mathbb{P}\left(B^{m,m'}_j \setminus E^{m,m'}_j \right) \leq e^{-(j-1)F_*} + e^{-(N-(j+L)+1)F_*} + M^2 e^{-2N H_*} .
\end{equation*}
\end{lemma}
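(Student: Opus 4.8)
The plan is to decompose the complementary event $B^{m,m'}_j \setminus E^{m,m'}_j$ into the three distinct ways in which the defining requirements of $E^{m,m'}_j$ can fail once $B^{m,m'}_j$ is assumed, and then apply a union bound together with the estimates already established. First I would observe that on the event $B^{m,m'}_j$ the genomes $X^m$ and $X^{m'}$ share a common length-$L$ word $w$ beginning at position $j$. Setting $a = X^m[1,j-1]$, $b = X^m[j+L,N]$, $c = X^{m'}[1,j-1]$, and $d = X^{m'}[j+L,N]$, we therefore automatically have $X^m = awb$ and $X^{m'} = cwd$ with $|a| = |c| = j-1$ and $|w| = L$. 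Consequently, the only conditions in the definition of $E^{m,m'}_j$ that remain to be checked are $a \neq c$, $b \neq d$, and $X \notin \bigcup_{m_3,m_4} T_j(m,m',m_3,m_4)$, so that
\[
B^{m,m'}_j \setminus E^{m,m'}_j = B^{m,m'}_j \cap \Bigl( \{a = c\} \cup \{b = d\} \cup \bigcup_{m_3,m_4} T_j(m,m',m_3,m_4) \Bigr).
\]
Dropping the intersection with $B^{m,m'}_j$ by monotonicity and applying a union bound reduces the problem to estimating the three pieces separately.

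For the first piece, the event $\{a = c\}$ is precisely the event $X^m[1,j-1] = X^{m'}[1,j-1]$, which is a non-overlapping repeat of length $j-1$ since $m \neq m'$. Formula (\ref{Eqn:BasicProb}) then gives $\mathbb{P}(a = c) = e^{-(j-1)F(p^m,p^{m'})} \leq e^{-(j-1)F_*}$, using hypothesis (4) of Theorem \ref{Thm2}. Symmetrically, $\{b = d\}$ is the non-overlapping repeat $X^m[j+L,N] = X^{m'}[j+L,N]$ of length $N-(j+L)+1$, so (\ref{Eqn:BasicProb}) yields $\mathbb{P}(b = d) \leq e^{-(N-(j+L)+1)F_*}$.

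For the third piece, I would bound the probability of the union over the swap configurations by a union bound. Each of $m_3, m_4$ ranges over the indices distinct from $m$, $m'$, and each other, so there are at most $M^2$ ordered pairs. To invoke Lemma \ref{lem3.3} I first check that its hypothesis holds here: since $F(p,p) = H_2(p)$ by definition, hypothesis (4) of Theorem \ref{Thm2} gives $H_2(p^m) = F(p^m,p^m) \geq F_*$ for every $m$, so $H_2(p^m) \geq H_*$ holds with $H_* = F_*$. Lemma \ref{lem3.3} then bounds each $\mathbb{P}(T_j(m,m',m_3,m_4))$ by $e^{-2NH_*}$, and summing over the at most $M^2$ pairs gives $\mathbb{P}\bigl(\bigcup_{m_3,m_4} T_j(m,m',m_3,m_4)\bigr) \leq M^2 e^{-2NH_*}$. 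Adding the three estimates produces the claimed inequality.

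The argument is essentially a bookkeeping exercise, so I do not anticipate a genuine analytic obstacle. The one point requiring care is verifying that the decomposition above is exhaustive, namely that, conditional on $B^{m,m'}_j$, the failure of $E^{m,m'}_j$ is captured exactly by the union $\{a=c\} \cup \{b=d\} \cup \bigcup_{m_3,m_4} T_j(m,m',m_3,m_4)$. This hinges on recognizing that $B^{m,m'}_j$ already forces the shared word $w$ and the length constraints $|a|=|c|=j-1$, $|w|=L$, so that nothing beyond $a \neq c$, $b \neq d$, and the $T_j$-exclusion is left to verify in the definition of $E^{m,m'}_j$.
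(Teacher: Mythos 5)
Your proposal is correct and follows essentially the same route as the paper: the same three-way decomposition of $B^{m,m'}_j \setminus E^{m,m'}_j$ into the prefix repeat $S^{m,m'}_{j-1}(1,1)$, the suffix repeat $S^{m,m'}_{N-(j+L)+1}(j+L,j+L)$, and the union of the $T_j$ events, followed by (\ref{Eqn:BasicProb}), Lemma \ref{lem3.3}, and a union bound. Your observation that $H_2(p^m) = F(p^m,p^m) \geq F_*$, so that Lemma \ref{lem3.3} applies with $H_* = F_*$, is a welcome clarification of a point the paper leaves implicit.
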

\begin{proof}
First, note that
\begin{equation*} %\label{Eqn:Exceptional}
    B^{m,m'}_j \setminus E^{m,m'}_j  \subset \left(S^{m,m'}_{j-1}(1,1) \right) \cup \left(S^{m,m'}_{N-(j+L)+1}(j+L,j+L) \right) \cup \left( \bigcup_{m_3,m_4} T_j(m,m',m_3,m_4) \right).
\end{equation*}
Since $m \neq m'$, we may apply (\ref{Eqn:BasicProb}) and obtain
\begin{equation*}
    \mathbb{P}\left( S^{m,m'}_{j-1}(1,1) \right) = e^{-(j-1)F_*},
\end{equation*}
and
\begin{equation*}
     \mathbb{P}\left( S^{m,m'}_{N-(j+L)+1}(j+L,j+L) \right) = e^{-(N-(j+L)+1)F_*}.
\end{equation*}
Also, by Lemma \ref{lem3.3} and the union bound, we have
\begin{equation*}
    \mathbb{P}\left( \bigcup_{m_3,m_4} T_j(m,m',m_3,m_4) \right) \leq M^2 e^{-2N H_*}.
\end{equation*}
By the union bound and these individual estimates, we obtain the desired bound.
\end{proof}

\begin{lemma} \label{lem5.4}
Under the hypotheses of Theorem 2,
\begin{equation*}
    \lim \mathbb{P} \left( \left( \bigcup B^{m,m'}_j \right) \setminus \left(\bigcup E^{m,m'}_j \right) \right) = 0,
\end{equation*}
where the unions are taken over the same set of indices defining $Z_n$ (in (\ref{Eqn:Z_n})).
\end{lemma}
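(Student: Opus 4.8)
The plan is to reduce the statement to a union bound over the individual events $B^{m,m'}_j \setminus E^{m,m'}_j$ already controlled in Lemma \ref{lem5.3}, and then to sum the resulting geometric tails using hypothesis (3) of Theorem \ref{Thm2}. The first step is a purely set-theoretic observation: for any two families of events indexed by the same set, one has the containment
\begin{equation*}
\left(\bigcup B^{m,m'}_j\right) \setminus \left(\bigcup E^{m,m'}_j\right) \subseteq \bigcup \left(B^{m,m'}_j \setminus E^{m,m'}_j\right).
\end{equation*}
Indeed, if an outcome lies in the left-hand side, then it lies in $B^{p,q}_r$ for some index $(p,q,r)$ while avoiding every $E^{m,m'}_j$; in particular it avoids $E^{p,q}_r$, so it lies in $B^{p,q}_r \setminus E^{p,q}_r$. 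Applying monotonicity and the union bound then gives
\begin{equation*}
\mathbb{P}\left(\left(\bigcup B^{m,m'}_j\right) \setminus \left(\bigcup E^{m,m'}_j\right)\right) \leq \sum_{m < m'} \sum_{\eta N \leq j \leq (1-\eta)N - L} \mathbb{P}\left(B^{m,m'}_j \setminus E^{m,m'}_j\right),
\end{equation*}
where the index set is exactly the one appearing in the definition of $Z_n$ in (\ref{Eqn:Z_n}).

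Next I would insert the bound from Lemma \ref{lem5.3}, which controls each summand by $e^{-(j-1)F_*} + e^{-(N-(j+L)+1)F_*} + M^2 e^{-2NH_*}$, and treat the three resulting contributions separately. There are $\binom{M}{2} \leq M^2/2$ pairs $(m,m')$ and at most $N$ admissible values of $j$. For the first contribution, the restriction $j \geq \eta N$ makes $\sum_j e^{-(j-1)F_*}$ a geometric tail bounded by a constant multiple of $e^{-\eta N F_*}$; multiplying by the number of pairs yields a bound of order $M^2 e^{-\eta N F_*}$. For the second contribution, the restriction $j \leq (1-\eta)N - L$ forces $N - (j+L) + 1 \geq \eta N + 1$, so the identical geometric-tail estimate gives another term of order $M^2 e^{-\eta N F_*}$. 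For the third contribution, summing the constant $M^2 e^{-2NH_*}$ over all $\binom{M}{2}\cdot N$ index triples gives a bound of order $M^4 N e^{-2NH_*}$.

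It then remains to check that each of the three terms tends to zero, and this is where hypothesis (3) of Theorem \ref{Thm2} — namely $\log(M_n)/N_n \to 0$ — does the essential work. Taking logarithms, the first two terms have exponents of the form $2\log M - \eta N F_* + O(1)$, which diverge to $-\infty$ since $\log M = o(N)$ and $F_* > 0$; the third has exponent $4\log M + \log N - 2NH_*$, which likewise tends to $-\infty$ because both $\log M$ and $\log N$ are $o(N)$ while $H_* > 0$. No single step here is a serious obstacle; the point requiring the most care is the index bookkeeping, and in particular the recognition that it is precisely the cushions $j \geq \eta N$ and $j \leq (1-\eta)N - L$ built into $Z_n$ that convert the position-dependent factors $e^{-(j-1)F_*}$ and $e^{-(N-(j+L)+1)F_*}$ into genuine exponential decay in $N$ — without excising the extreme values of $j$, the endpoint terms, where a flanking segment has bounded length, would fail to be negligible.
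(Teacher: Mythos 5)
Your proof is correct and follows essentially the same route as the paper: the set-theoretic containment, the union bound, the estimate from Lemma \ref{lem5.3}, and the cushions $j \geq \eta N$ and $j \leq (1-\eta)N - L$ combined with $\log M = o(N)$ to kill all three terms. If anything, your bookkeeping is slightly more careful than the paper's, which elides the sum over $j$ (harmlessly, since the resulting extra factor of $N$ is absorbed by the exponential decay in $N$).
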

\begin{proof}
Observe that
\begin{equation*}
    \left( \bigcup B^{m,m'}_j \right) \setminus \left(\bigcup E^{m,m'}_j \right) \subset \bigcup \left( B^{m,m'}_j \setminus E^{m,m'}_j \right).
\end{equation*}
Then by Lemma \ref{lem5.3} and the fact that $j-1 \geq \eta N$ and $N-(j+L) \geq (1-\delta - \eta)N$, we have that
\begin{align*}
    \mathbb{P} \left( \left( \bigcup B^{m,m'}_j \right) \setminus \left(\bigcup E^{m,m'}_j \right) \right) & \leq \sum_{m,m'} \mathbb{P} \left( B^{m,m'}_j \setminus E^{m,m'}_j \right) \\
    & \leq M^2 \bigl( e^{-(j-1)F_*} + e^{-(N-(j+L)+1)F_*} + M^2 e^{-2N H_*} \bigr) \\
    & \leq M^2 e^{-\eta N F_*} + M^2 e^{-(1-\delta - \eta)N F_*} + M^4 e^{-2N H_*}.
\end{align*}
By the hypothesis that $\log(M)/N$ tends to zero, the right-hand side of the previous display tends to zero, and we obtain the desired limit.
\end{proof}

We now turn our attention to the main proof of Theorem \ref{Thm2}.

\vspace{2mm}

\begin{PfofTheorem2} \,

By Lemmas \ref{lem5.1} and \ref{lem5.2}, the second moment method (Lemma \ref{lem3.5}) applies, and thus $Z_n \geq 1$ holds with probability tending to one. Note that the event $Z_n \geq 1$ is the same as $\bigcup B_j^{m,m'}$ (where the indices in the union are the same as in (\ref{Eqn:Z_n})), and thus the probability of $\bigcup B_j^{m,m'}$ tends to one. Then by Lemma \ref{lem5.4}, we see that $\bigcup E^{m,m'}_j$ occurs with probability tending to one. By Lemma \ref{lem3.4}, we have $\bigcup E_j^{m,m'} \subset I^c$, and therefore non-identifiability occurs with probability tending to one.
%Thus, one of the configurations described in \ref{lem3.4} occurs with a probability tending to one. This result, however, does not imply non-identifiability as the case where $a=c, b=d$ would produce a set of reads that could be reliably constructed. 
%By lemmas \ref{lem5.3} and \ref{lem5.4}, probability that at least one of the events $E^{m,m'}_j$ occurs tends to one. Then, by inclusion, the probability of non-identifiability tends to one. 

\end{PfofTheorem2}

\section{Discussion and future work} \label{sect6}
This research paper is designed to explore the basic requirements for shotgun DNA reconstruction in the metagenomics case. We obtain these results by considering a blind reconstruction of a set of genomes under an IID model. The model we explore in this paper serves as a base for future, more realistic work. %Future models to explore include, but are not limited to:
We consider the following generalizations to be interesting directions for future research. 

\textit{Negative Result}. There is a gap between the identifiability and non-identifiability result studied in this work. Future work may analyze a different repeat configuration in order to get a tighter bound in the negative direction.

\textit{Short Tandem Repeats}. Large sequences, on the order of several thousand base pairs, made up of short repeated segments occur naturally within biological systems. The IID model fails to account for this phenomenon, and the greedy algorithm falls apart upon the introduction of these short tandem repeats. However, the length and composition of these repeats are unique to different organisms and this knowledge can be utilized to simplify real world sequencing problems.

\textit{Markov Statistical Model}. Within biological systems, the probability of the occurrence of a specific nitrogenous base is not independent of the bases that have occurred previously in the sequence. Due to the nature of DNA transcription and translation, base pair occurrences are heavily influenced by the previous three base pairs as well as lightly influenced by up to several hundred previous base pairs. Information on how strings of tetramers can be utilized to reconstruct specific DNA sequences may be found in biological databases.

\textit{Noisy Data}. DNA shotgun sequencing technology inadvertently introduces noise into the system via base pair insertions and deletions. The model studied in this paper fails to account for noise that may be present within the DNA sequences. Future work may study how this noise shifts the threshold for reconstruction and find more realistic parameters for a successful experiment. 

\section*{Acknowledgments}
This research was conducted during a summer research program for undergraduates that was supported by the National Science Foundation through grant DMS-1847144. The author wishes to thank Robert Bland and Jacob Raymond for their help and Dr. Kevin McGoff for supervising this project.
%%%%%%%%%%%%%%%%%%%%%%%%%%%%%%%%%%%%%%%%%%%%%%%
%\newpage

\bibliographystyle{plain}
\bibliography{refs.bib}

\end{document}